\author[C.~Sanna]{Carlo Sanna}
\thanks{$\dagger\,$C.~Sanna is supported by a postdoctoral fellowship of INdAM and is a member of the INdAM group GNSAGA}
\address{Universit\`a di Genova\\Department of Mathematics\\Genova, Italy}
\email{carlo.sanna.dev@gmail.com}
\keywords{binary recurrence sequence; lowest common multiple; Lehmer sequence; random sequence}
\subjclass[2010]{Primary: 11B37, Secondary: 11N37.}
\title[On the l.c.m. of random terms of binary recurrence sequences]{On the l.c.m. of random terms of\\binary recurrence sequences}
\newtheorem{thm}{Theorem}[section]
\newtheorem{cor}{Corollary}[section]
\newtheorem{lem}[thm]{Lemma}
\theoremstyle{remark}
\newcommand{\lcm}{\operatorname{lcm}}
\newcommand{\Li}{\operatorname{Li}}
\begin{document}

\begin{abstract}
For every positive integer $n$ and every $\delta \in [0,1]$, let $B(n, \delta)$ denote the probabilistic model in which a random set $A \subseteq \{1, \dots, n\}$ is constructed by choosing independently every element of $\{1, \dots, n\}$ with probability $\delta$.
Moreover, let $(u_k)_{k \geq 0}$ be an integer sequence satisfying $u_k = a_1 u_{k - 1} + a_2 u_{k - 2}$, for every integer $k \geq 2$, where $u_0 = 0$, $u_1 \neq 0$, and $a_1, a_2$ are fixed nonzero integers; and let $\alpha$ and $\beta$, with $|\alpha| \geq |\beta|$, be the two roots of the polynomial $X^2 - a_1 X - a_2$.
Also, assume that $\alpha / \beta$ is not a root of unity.

We prove that, as $\delta n / \log n \to +\infty$, for every $A$ in $B(n, \delta)$ we have
\begin{equation*}
\log \lcm (u_a : a \in A) \sim \frac{\delta\Li_2(1 - \delta)}{1 - \delta} \cdot \frac{3\log\!\big|\alpha / \!\sqrt{(a_1^2, a_2)}\big|}{\pi^2} \cdot n^2
\end{equation*}
with probability $1 - o(1)$, where $\lcm$ denotes the lowest common multiple, $\Li_2$ is the dilogarithm, and the factor involving $\delta$ is meant to be equal to $1$ when $\delta = 1$.

This extends previous results of Akiyama, Tropak, Matiyasevich, Guy, Kiss and M\'aty\'as, who studied the deterministic case $\delta = 1$, and is motivated by an asymptotic formula for $\lcm(A)$ due to Cilleruelo, Ru\'{e}, \v{S}arka, and Zumalac\'{a}rregui.
\end{abstract}

\maketitle

\section{Introduction}

It is well known that the Prime Number Theorem is equivalent to the asymptotic formula
\begin{equation}\label{equ:lcm123}
\log \lcm(1,2,\dots,n) \sim n ,
\end{equation}
as $n \to +\infty$, where $\lcm$ denotes the lowest common multiple.

For every positive integer $n$ and every $\delta \in [0,1]$, let $B(n, \delta)$ denote the probabilistic model in which a random set $A \subseteq \{1, \dots, n\}$ is constructed by choosing independently every element of $\{1, \dots, n\}$ with probability $\delta$.
Motivated by~\eqref{equ:lcm123}, Cilleruelo, Ru\'{e}, \v{S}arka, and Zumalac\'{a}rregui~\cite{MR3239153} proved the following result (see also~\cite{AKM19} for a more precise version, and~\cite{MR3649012,MR3640773} for others results of similar flavor).

\begin{thm}\label{thm:Cilleruelo}
Let $A$ be a random set in $B(n, \delta)$.
Then, as $\delta n \to +\infty$, we have
\begin{equation*}
\log \lcm (A) \sim \frac{\delta \log(1/\delta)}{1 - \delta} \cdot n ,
\end{equation*}
with probability $1-o(1)$, where the factor involving $\delta$ is meant to be equal to $1$ for $\delta = 1$.
\end{thm}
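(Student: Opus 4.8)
The plan is to compute $\mathbb E[\log\lcm(A)]$ exactly in terms of the second Chebyshev function $\psi$, to estimate it via the Prime Number Theorem, and then to establish concentration of $\log\lcm(A)$ about its mean through a second-moment argument.

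For the mean, I would first pass to an arithmetic description. Writing $\lcm(A)=\prod_{p}p^{\max_{a\in A}v_p(a)}$ and noting that $\max_{a\in A}v_p(a)$ counts the prime powers $p^k$ dividing some element of $A$, one gets $\log\lcm(A)=\sum_{m\ge 2}\Lambda(m)\,X_m$, where $\Lambda$ is the von Mangoldt function and $X_m$ is the indicator of the event that $A$ contains a multiple of $m$. Since $\{1,\dots,n\}$ has exactly $\lfloor n/m\rfloor$ multiples of $m$, each chosen independently with probability $\delta$, we have $\mathbb E[X_m]=1-(1-\delta)^{\lfloor n/m\rfloor}$. Collecting the terms by the common value $j=\lfloor n/m\rfloor$, so that $\sum_{\lfloor n/m\rfloor=j}\Lambda(m)=\psi(n/j)-\psi(n/(j+1))$, and summing by parts (the sum over $j$ being finite since $\psi(n/j)=0$ once $j>n/2$), one obtains the exact identity
\begin{equation*}
\mathbb E\big[\log\lcm(A)\big]\;=\;\delta\sum_{j\ge 1}(1-\delta)^{j-1}\,\psi(n/j).
\end{equation*}
Replacing $\psi(x)$ by $x$ is legitimate because $\psi(x)-x$ is $o(x)$ for large $x$ and $O(x)$ in general, so that after summation against $(1-\delta)^{j-1}$ the extra outer factor $\delta$ renders the total error negligible, uniformly as $\delta n\to+\infty$; together with $\sum_{j\ge 1}(1-\delta)^{j-1}/j=\log(1/\delta)/(1-\delta)$ this gives $\mathbb E[\log\lcm(A)]\sim\frac{\delta\log(1/\delta)}{1-\delta}\,n$. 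For $\delta=1$ the identity collapses to $\mathbb E[\log\lcm(A)]=\psi(n)\sim n$, i.e.\ to the Prime Number Theorem itself, in accordance with the stated convention.

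For the concentration I would bound the variance of $\log\lcm(A)=\sum_m\Lambda(m)X_m$. The crucial structural point is that $X_m$ depends only on the membership bits of the multiples of $m$, so $X_m$ and $X_{m'}$ are \emph{independent} as soon as $\lcm(m,m')>n$, while for $\lcm(m,m')\le n$ inclusion--exclusion on multiples gives the closed form
\begin{equation*}
\operatorname{Cov}(X_m,X_{m'})\;=\;(1-\delta)^{\lfloor n/m\rfloor+\lfloor n/m'\rfloor}\Big((1-\delta)^{-\lfloor n/\lcm(m,m')\rfloor}-1\Big)\;\ge\;0,
\end{equation*}
which, since $m$ and $m'$ both divide $\lcm(m,m')$, is at most $(1-\delta)^{\lfloor n/\lcm(m,m')\rfloor}$ and, more usefully, at most $\frac{\delta n}{\lcm(m,m')}\,(1-\delta)^{\lfloor n/\min(m,m')\rfloor}$. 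In $\operatorname{Var}(\log\lcm(A))=\sum_{m,m'}\Lambda(m)\Lambda(m')\operatorname{Cov}(X_m,X_{m'})$, the diagonal $\sum_m\Lambda(m)^2\operatorname{Var}(X_m)$ is $O\big(\delta n(\log n)^2\big)$, using $\Lambda(m)^2\le\Lambda(m)\log n$, $\operatorname{Var}(X_m)\le\delta n/m$, and $\sum_{m\le n}\Lambda(m)/m\ll\log n$; and the off-diagonal I would bound via the last covariance estimate, organizing the double sum by the smaller of $m,m'$ (using $\sum_{m'>m,\, \lcm(m,m')\le n}\Lambda(m')/\lcm(m,m')\ll(\log n)/m$) and then summing over $m$, where the factor $(1-\delta)^{\lfloor n/m\rfloor}$ confines the range essentially to $m\gtrsim\delta n$. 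A careful estimate then shows that both the diagonal and the off-diagonal are $o\big(\mathbb E[\log\lcm(A)]^2\big)$ whenever $\delta n\to+\infty$, so Chebyshev's inequality gives $\log\lcm(A)\sim\mathbb E[\log\lcm(A)]$ with probability $1-o(1)$; combined with the estimate of the mean, this is the theorem.

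The hard part will be the variance estimate, and in particular making it uniform over the whole range $\delta n\to+\infty$: when $\delta$ is small the mean is only of order $\delta\log(1/\delta)\,n$, so the bound on the variance must be correspondingly sharp. A crude bounded-differences (Azuma) estimate --- relying only on $\log\lcm(A)\le\psi(n)=O(n)$ and on the fact that changing one membership bit alters $\log\lcm(A)$ by at most $\log n$ --- gives concentration only when $\tfrac{\delta\log(1/\delta)}{1-\delta}\sqrt n/\log n\to+\infty$, which fails for slowly growing $\delta n$; so one genuinely has to exploit the independence of $X_m$ and $X_{m'}$ for $\lcm(m,m')>n$ together with the refined covariance bound, and control the restricted double sum over prime-power pairs by a careful multiparameter argument.
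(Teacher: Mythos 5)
Your proposal is essentially correct, and it is the same second-moment strategy that this paper uses for its own main theorem (Lemmas \ref{lem:XLIA}--\ref{lem:VX}): write $\log\lcm$ as a von Mangoldt--weighted sum of indicators, compute the mean by grouping on $j=\lfloor n/m\rfloor$ and Abel summation against $\delta(1-\delta)^{j-1}$, bound the covariances, and finish with Chebyshev. (Note the paper does not actually prove Theorem~\ref{thm:Cilleruelo}; it only quotes it from Cilleruelo--Ru\'e--\v{S}arka--Zumalac\'arregui, so the comparison is with the proof of Theorem~\ref{thm:Main}, of which your argument is the exact integer-sequence analogue with $\psi$ playing the role of $\log|\phi_r|$ and $x$ that of $\Phi(x)$.)

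One remark that simplifies your ``hard part'': the crude bound $\operatorname{Cov}(X_m,X_{m'})\leq 1-(1-\delta)^{\lfloor n/[m,m']\rfloor}\leq \delta n/[m,m']$ (the analogue of the paper's Lemma~\ref{lem:Bernoulli} step), together with $X_m\perp X_{m'}$ when $[m,m']>n$, already gives
\begin{equation*}
\mathbb{V}\big(\log\lcm(A)\big)\;\leq\;\delta n\sum_{m,m'\leq n}\frac{\Lambda(m)\Lambda(m')(m,m')}{mm'}\;\ll\;\delta n(\log n)^2 ,
\end{equation*}
since distinct primes contribute $\big(\sum_{m\le n}\Lambda(m)/m\big)^2\ll(\log n)^2$ and equal primes contribute $\ll\sum_{p\le n}(\log p)^2/p\ll(\log n)^2$. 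This suffices for the full range $\delta n\to+\infty$ without your refined factor $(1-\delta)^{\lfloor n/\min(m,m')\rfloor}$: writing $M=\delta n$, either $M\leq\sqrt{n}$, in which case $\log(1/\delta)\gg\log n$ and $\mathbb{V}/\mathbb{E}^2\ll(\log n)^2/(M\log^2(1/\delta))\ll 1/M\to 0$, or $M>\sqrt{n}$, in which case $\mathbb{V}/\mathbb{E}^2\ll(\log n)^2/M\to 0$. The point is that the $\log(1/\delta)$ in the mean rescues you precisely when $\delta n$ grows slowly; this is why the integer case needs only $\delta n\to+\infty$ while the paper's recurrence case, whose mean carries the bounded factor $\Li_2(1-\delta)/(1-\delta)$ instead, needs $\delta n/\log n\to+\infty$. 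Your mean computation, including the identity $\sum_{j\geq1}(1-\delta)^{j-1}/j=\log(1/\delta)/(1-\delta)$ and the treatment of $\psi(x)-x$, is fine as sketched.
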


Let $(u_k)_{k \geq 0}$ be an integer sequence satisfying $u_k = a_1 u_{k - 1} + a_2 u_{k - 2}$, for every integer $k \geq 2$, where $u_0 = 0$, $u_1 \neq 0$, and $a_1, a_2$ are two fixed nonzero integers.
Moreover, let $\alpha$ and $\beta$, with $|\alpha| \geq |\beta|$, be the two roots of the polynomial $X^2 - a_1 X - a_2$.
We assume that $\alpha / \beta$ is not a root of unity, which is a necessary and sufficient condition to have $u_k \neq 0$ for all integers $k \geq 1$.

Akiyama~\cite{MR1077711} and, independently, Tropak~\cite{MR1114366} proved the following analog of~\eqref{equ:lcm123} for the sequence $(u_k)_{k \geq 1}$.

\begin{thm}\label{thm:Akiyama}
We have
\begin{equation*}
\log \lcm (u_1, u_2, \dots, u_n) \sim \frac{3\log\!\big|\alpha / \!\sqrt{(a_1^2, a_2)}\big|}{\pi^2} \cdot n^2 ,
\end{equation*}
as $n \to +\infty$.
\end{thm}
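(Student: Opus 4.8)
The plan is to express $\log\lcm(u_1,\dots,u_n)$ in terms of the cyclotomic‑type factors of the sequence and to extract a lower‑order but non‑negligible correction supported on the primes dividing $\gcd(a_1,a_2)$. Since $u_k=u_1\widetilde u_k$ with $\widetilde u_k:=(\alpha^k-\beta^k)/(\alpha-\beta)\in\mathbb Z$, and since $\lcm(u_1,\dots,u_n)=|u_1|\cdot\lcm(\widetilde u_1,\dots,\widetilde u_n)$, we may assume $u_1=1$. Write $\Delta:=a_1^2+4a_2$ (nonzero, as $\alpha/\beta$ is not a root of unity), call a prime $p$ \emph{good} if $p\nmid 2a_2\Delta$ and \emph{bad} otherwise, and recall the classical $p$-adic valuation theory of Lucas sequences: for good $p$ one has $p\mid u_k$ if and only if $\rho(p)\mid k$, where $\rho(p)$ is the rank of apparition of $p$, and $v_p(u_{m\rho(p)})=v_p(u_{\rho(p)})+v_p(m)$ for every $m\ge1$. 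There are finitely many bad primes, and for each of them $\bigl(\max_{k\le n}v_p(u_k)\bigr)\log p\le\max_{k\le n}\log|u_k|=O(n)$.

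Next I would introduce the rational integers $\Phi_k:=\prod_{d\mid k}u_d^{\mu(k/d)}$ for $k\ge1$. From the valuation formula one computes that, for a good prime $p$, $v_p(\Phi_k)=v_p(u_{\rho(p)})$ if $k=\rho(p)$, $v_p(\Phi_k)=1$ if $k=\rho(p)p^i$ with $i\ge1$, and $v_p(\Phi_k)=0$ otherwise; summing over $k\le n$ gives $\sum_{k\le n}v_p(\Phi_k)=\max_{1\le k\le n}v_p(u_k)$ for every good $p$ (both sides being $0$ when $\rho(p)>n$). Since $\log\lcm(u_1,\dots,u_n)=\sum_p\bigl(\max_{k\le n}v_p(u_k)\bigr)\log p$ and $\sum_{k\le n}\log|\Phi_k|=\sum_p\bigl(\sum_{k\le n}v_p(\Phi_k)\bigr)\log p$, subtracting the latter from the former leaves only the bad primes:
\[
\log\lcm(u_1,\dots,u_n)=\sum_{k\le n}\log|\Phi_k|+\sum_{p\ \mathrm{bad}}\Bigl(\max_{1\le k\le n}v_p(u_k)-\sum_{k\le n}v_p(\Phi_k)\Bigr)\log p .
\]

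It remains to estimate the two sums. For the first, $\log|u_d|=d\log|\alpha|+O(\log d)$: when $|\alpha|>|\beta|$ this follows at once from $|1-(\beta/\alpha)^d|\asymp1$, and when $|\alpha|=|\beta|$ (complex conjugate roots) from the lower bound $|(\alpha/\beta)^d-1|\gg d^{-C}$, a standard consequence of Baker's theorem on linear forms in logarithms applied to the algebraic number $\alpha/\beta$, not a root of unity. Writing $\log|\Phi_k|=\sum_{d\mid k}\mu(k/d)\log|u_d|$, the main part is $\log|\alpha|\sum_{d\mid k}\mu(k/d)d=\varphi(k)\log|\alpha|$, so $\sum_{k\le n}\log|\Phi_k|=\log|\alpha|\sum_{k\le n}\varphi(k)+O\bigl(n(\log n)^2\bigr)=\tfrac{3}{\pi^2}\log|\alpha|\cdot n^2+O\bigl(n(\log n)^2\bigr)$, using $\sum_{k\le n}\varphi(k)=\tfrac{3}{\pi^2}n^2+O(n\log n)$. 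For the bad‑prime correction, $\bigl(\max_k v_p(u_k)\bigr)\log p=O(n)$, so the relevant quantity is $\sum_{k\le n}v_p(\Phi_k)=\sum_{d\le n}v_p(u_d)M(n/d)$, where $M(x):=\sum_{e\le x}\mu(e)$ (write $k=de$ and sum over $e$). The key input is that, for $p\mid\gcd(a_1,a_2)$,
\[
v_p(u_k)=\tfrac12\,v_p\bigl((a_1^2,a_2)\bigr)\,k+O(\log k),
\]
which I would prove by computing in $\mathbb Q_p(\sqrt{\Delta})$: one finds $v_p(\alpha^k-\beta^k)=\tfrac12 v_p((a_1^2,a_2))\,k+v_p\bigl((\alpha/\beta)^k-1\bigr)$, and $v_p((\alpha/\beta)^k-1)=O(\log k)$ by the local lifting‑the‑exponent principle (governed by the order of $\alpha/\beta$ in the residue field and by ramification), with extra care when $p=2$; for bad primes $p\nmid\gcd(a_1,a_2)$ one has instead $v_p(u_k)=O(\log k)$. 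Using the exact identity $\sum_{d\le n}d\,M(n/d)=\sum_{k\le n}\varphi(k)$ and the bound $|M(x)|\le x$, one deduces $\sum_{k\le n}v_p(\Phi_k)=\tfrac12 v_p((a_1^2,a_2))\cdot\tfrac{3}{\pi^2}n^2+O\bigl(n(\log n)^2\bigr)$ for $p\mid\gcd(a_1,a_2)$ and $\sum_{k\le n}v_p(\Phi_k)=O\bigl(n(\log n)^2\bigr)$ otherwise. Summing $\log p$ against these over the finitely many bad primes, and using $\sum_p v_p((a_1^2,a_2))\log p=\log(a_1^2,a_2)$, the correction equals $-\tfrac{3}{\pi^2}n^2\log\sqrt{(a_1^2,a_2)}+O\bigl(n(\log n)^2\bigr)$.

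Combining everything,
\[
\log\lcm(u_1,\dots,u_n)=\frac{3}{\pi^2}\Bigl(\log|\alpha|-\log\!\sqrt{(a_1^2,a_2)}\Bigr)n^2+O\bigl(n(\log n)^2\bigr)=\frac{3\log\!\big|\alpha/\!\sqrt{(a_1^2,a_2)}\big|}{\pi^2}\,n^2+o(n^2),
\]
which is the assertion (with an explicit error term). The part I expect to be the main obstacle is the $p$-adic estimate $v_p(u_k)=\tfrac12 v_p((a_1^2,a_2))\,k+O(\log k)$ for $p\mid\gcd(a_1,a_2)$: it is precisely the linear growth of this ``imprimitive'' common factor of the terms $u_k$ that produces the $\sqrt{(a_1^2,a_2)}$ in the final formula, and establishing it requires a careful valuation computation in ramified quadratic extensions of $\mathbb Q_p$, the prime $p=2$ being the delicate case; the appeal to Baker's theorem when $|\alpha|=|\beta|$ is a secondary but indispensable ingredient.
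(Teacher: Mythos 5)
Your proposal is correct in outline and would yield the stated asymptotic (indeed with an explicit error term $O(n(\log n)^2)$), but it is worth noting that the paper does not actually prove this theorem directly --- it quotes it from Akiyama and Tropak --- and the closest thing to an in-house proof is the $\delta=1$ specialization of the proof of Theorem~\ref{thm:Main}. Compared with that machinery, your route differs in one essential choice: you stay with the Lucas sequence $(u_k)$ itself, organize $\log\lcm(u_1,\dots,u_n)=\sum_p\big(\max_{k\le n}\nu_p(u_k)\big)\log p$ via the identity $\max_{k\le n}\nu_p(u_k)=\sum_{k\le n}\nu_p(\Phi_k)$ for ``good'' primes, and then extract the $\sqrt{(a_1^2,a_2)}$ correction by a direct $p$-adic valuation computation for the finitely many primes dividing $(a_1,a_2)$. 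The paper instead normalizes at the outset, writing $u_k=b^{(k-1)/2}u_1\widetilde u_k$ (odd $k$) and $u_k=a_1 b^{k/2-1}u_1\widetilde u_k$ (even $k$) with $b=(a_1^2,a_2)$ and $(\widetilde u_k)$ the Lehmer sequence attached to $\zeta=\alpha/\sqrt b$, $\eta=\beta/\sqrt b$; the cofactors contribute only $O(n)$ to the log of the lcm, and the constant $\log|\alpha/\sqrt b|=\log|\zeta|$ then emerges automatically from the growth of $\widetilde u_k$, with Stewart's primitive-divisor lemmas and the rank of appearance doing the bookkeeping (the sum $\sum_{\varrho(m)\le n}\Lambda(m)$ is exactly your $\sum_p\max_k\nu_p\log p$ in different clothing). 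The practical upshot is that the step you correctly single out as the main obstacle --- proving $\nu_p(u_k)=\tfrac12\nu_p\big((a_1^2,a_2)\big)k+O(\log k)$ for $p\mid(a_1,a_2)$ via ramified local fields and lifting the exponent at $p=2$ --- is dissolved by the Lehmer normalization: that linear term is precisely $\nu_p\big(b^{(k-1)/2}\big)$, visible by inspection once the factorization is written down, and the residual $\nu_p(\widetilde u_k)=O(\log k)$ follows from Lemma~\ref{lem:Rank}. Your approach is sound and self-contained (your verification that $\sum_{k\le n}\nu_p(\Phi_k)=\max_{k\le n}\nu_p(u_k)$ for good primes, the use of Baker's theorem when $|\alpha|=|\beta|$, and the identity $\sum_{d\le n}d\,M(n/d)=\sum_{k\le n}\varphi(k)$ are all correct), but to make it complete you would still have to carry out the local computation you flag; if you want to avoid that, the cleanest fix is exactly the paper's device of dividing out the imprimitive part $b^{\lfloor(k-1)/2\rfloor}$ before doing any prime-by-prime analysis.
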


Special cases of Theorem~\ref{thm:Akiyama} were previously proved by Matiyasevich, Guy~\cite{MR1712797}, Kiss and M\'{a}ty\'{a}s~\cite{MR993902}.
Furthermore, Akiyama~\cite{MR1242715, MR1394375} generalized Theorem~\ref{thm:Akiyama} to sequences having some special divisibility properties, while Akiyama and Luca~\cite{MR3150887} studied $\lcm(u_{f(1)}, \dots, u_{f(n)})$ when $f$ is a polynomial, $f = \varphi$ (the Euler's totient function), $f = \sigma$ (the sum of divisors function), or $f$ is a binary recurrence sequence.

Motivated by Theorem~\ref{thm:Cilleruelo}, we give the following generalization of Theorem~\ref{thm:Akiyama}.

\begin{thm}\label{thm:Main}
Let $A$ be a random set in $B(n, \delta)$.
Then, as $\delta n / \log n \to +\infty$, we have
\begin{equation}\label{equ:Main}
\lcm(u_a : a \in A) \sim \frac{\delta\Li_2(1 - \delta)}{1 - \delta} \cdot \frac{3\log\!\big|\alpha / \!\sqrt{(a_1^2, a_2)}\big|}{\pi^2} \cdot n^2 ,
\end{equation}
with probability $1 - o(1)$, where $\Li_2(z) := \sum_{k=1}^\infty z^k / k^2$ is the dilogarithm and the factor involving $\delta$ is meant to be equal to $1$ when $\delta = 1$.
\end{thm}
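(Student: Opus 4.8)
The plan is to convert the problem into an additive question about $p$-adic valuations, isolate a main term $\sum_{d\le n}L_d X_d$ carried by the ``primitive parts'' of the sequence, compute its expectation, and prove concentration around it. First I would reduce to the companion Lucas sequence $U_k:=(\alpha^k-\beta^k)/(\alpha-\beta)$ (note $\alpha\ne\beta$, since $\alpha/\beta$ is not a root of unity): as $u_k=u_1U_k$ and $u_1$ has only finitely many prime divisors, $\log\lcm(u_a:a\in A)=\log\lcm(U_a:a\in A)+O(1)$. For every prime $p$ outside a fixed finite set $\mathcal E$ (the divisors of $2a_2$ and the ramified primes), let $\rho(p)$ be its rank of apparition; then $p\mid U_a\iff\rho(p)\mid a$, and lifting-the-exponent gives $\nu_p(U_a)=\nu_p(U_{\rho(p)})+\nu_p(a/\rho(p))$ whenever $\rho(p)\mid a$. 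Setting $L_d:=\sum_{\rho(p)=d}\nu_p(U_d)\log p$ (the logarithm of the $d$-th primitive part, essentially $\log|\Psi_d|$ for the $d$-th cyclotomic value $\Psi_d$) and $X_d:=\mathbf{1}[\exists\,a\in A:\ d\mid a]$, this yields
\[
\log\lcm(u_a:a\in A)=S+R,\qquad S:=\sum_{d\le n}L_d X_d,
\]
where $R\ge0$ collects the secondary lifting-the-exponent terms $\sum_{\rho(p)=d}(\log p)\max\{\nu_p(a/d):a\in A,\ d\mid a\}$, the $O(\log n)$ contribution of the primes in $\mathcal E$, and the contribution of the primes $q\mid(a_1,a_2)$ --- for which $\nu_q(U_a)$ grows linearly in $a$, hence whose total contribution is $O(\max A)=o(\delta n^2)$ on the event $\max A=(1-o(1))n$, itself of probability $1-o(1)$ since $\mathbb P(\max A<n-t)=(1-\delta)^{t+1}$ and $\delta n/\log n\to\infty$.

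Since $\mathbb E\max\{\nu_p(a/d):a\in A,\ d\mid a\}=\sum_{\ell\ge1}(1-(1-\delta)^{\lfloor n/(dp^\ell)\rfloor})$, a short estimate gives $\mathbb E R=O(n\log n)$, so $R=o(\delta n^2)$ with probability $1-o(1)$ by Markov's inequality --- this is the step where $\delta n/\log n\to\infty$ is used decisively, through $n\log n=o(\delta n^2)$. For the main term, $\mathbb E S=\sum_{d\le n}L_d(1-(1-\delta)^{\lfloor n/d\rfloor})$. The partial sums $M(x):=\sum_{d\le x}L_d$ satisfy $M(x)=Cx^2+o(x^2)$ with $C:=3\log\big|\alpha/\!\sqrt{(a_1^2,a_2)}\big|/\pi^2$ by Theorem~\ref{thm:Akiyama} (since $M(x)$ and $\log\lcm(U_1,\dots,U_{\lfloor x\rfloor})$ differ by $O(x\log x)$); alongside I would prove the sharper $M(x)=Cx^2+O(x\log x)$, from $\sum_{d\le x}\phi(d)=\tfrac3{\pi^2}x^2+O(x\log x)$ together with estimates on the primitive parts of the cyclotomic values (the factor $\sqrt{(a_1^2,a_2)}$ accounting for the common prime factors of $a_1,a_2$, and Baker's theorem on linear forms in logarithms supplying the needed lower bounds when $|\alpha|=|\beta|$). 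Then partial summation with $M$, followed by grouping $d$ according to $j=\lfloor n/d\rfloor$ and the telescoping identity $\tfrac1{j^2}-\tfrac1{(j+1)^2}=\tfrac{2j+1}{j^2(j+1)^2}$, gives
\[
\mathbb E S=Cn^2\sum_{j\ge1}(1-(1-\delta)^j)\Big(\frac1{j^2}-\frac1{(j+1)^2}\Big)+o(\delta n^2)=\frac{\delta\,\Li_2(1-\delta)}{1-\delta}\cdot Cn^2+o(\delta n^2),
\]
the series being summed in closed form via the definition of $\Li_2$ (it tends to $1$ as $\delta\to1$, in agreement with Theorem~\ref{thm:Akiyama}). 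In particular $\mathbb E S\asymp\delta n^2$, so the error terms above are $o(\mathbb E S)$.

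Finally I would prove concentration of $S$. Partition $\{1,\dots,n\}$ into blocks $\mathcal B_j:=\{d:\lfloor n/d\rfloor=j\}$. If $d,d'\in\mathcal B_j$ then $d/d'<(j+1)/j$, so $id=i'd'$ with $1\le i,i'\le j$ forces $i=i'$ and $d=d'$; hence the multiple-sets $\{d,2d,\dots,jd\}$ $(d\in\mathcal B_j)$ are pairwise disjoint and the indicators $\{X_d:d\in\mathcal B_j\}$ are mutually independent. Writing $S=\sum_jS_j$ with $S_j:=\sum_{d\in\mathcal B_j}L_dX_d$, one has $\operatorname{Var}(S_j)=\sum_{d\in\mathcal B_j}L_d^2\operatorname{Var}(X_d)$, and using $L_d=O(d)$, $|\mathcal B_j|=O(n/j^2)$, and $\operatorname{Var}(X_d)\le\min(\delta j,e^{-\delta j})$ for $d\in\mathcal B_j$, a routine splitting at $j=\sqrt n$ gives $\sum_j\operatorname{Var}(S_j)^{1/2}=O(n^{3/2}\sqrt\delta)$. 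By the triangle inequality in $L^2$, $\operatorname{Var}(S)\le\big(\sum_j\operatorname{Var}(S_j)^{1/2}\big)^2=O(\delta n^3)=o((\mathbb E S)^2)$ because $\delta n\to\infty$; Chebyshev's inequality then gives $S=(1+o(1))\mathbb E S$ with probability $1-o(1)$, and combining with the bound on $R$ proves \eqref{equ:Main}. I expect the main obstacle to be, on the one hand, the quantitative refinement of Theorem~\ref{thm:Akiyama} with an error term strong enough to survive division by $\delta n^2$ throughout the range $\delta n/\log n\to\infty$ (routine when $|\alpha|>|\beta|$, but genuinely relying on Baker's theorem when $|\alpha|=|\beta|$), together with the careful tracking of the $\delta$-dependence of all error terms as $\delta\to0$; by contrast, the within-block independence of the $X_d$ is the clean structural fact that makes the variance negligible.
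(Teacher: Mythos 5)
Your proposal is correct, and while the expectation computation is essentially the one in the paper, the overall organization differs in two genuine ways. First, the paper does not work with the Lucas sequence $U_k$ directly: it passes to the Lehmer sequence attached to $\zeta=\alpha/\sqrt{b}$, $\eta=\beta/\sqrt{b}$ with $b=(a_1^2,a_2)$, which strips out the primes dividing $b$ once and for all (at the cost of a deterministic $O(n)$) and makes Stewart's lemmas on ranks and primitive parts directly applicable; it then writes $\log L$ \emph{exactly} as $\sum_{\varrho(m)\le n}\Lambda(m)I_A(m)$ over prime powers (Lemma~\ref{lem:XLIA}), so that your lifting-the-exponent remainder $R$ is absorbed into the $O(\tau(r)\log(r+1))$ error of Lemma~\ref{lem:Lambda} rather than bounded by a separate Markov argument. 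Your ad hoc treatment of the primes dividing $(a_1,a_2)$ achieves the same end, though your ``$O(\log n)$ for the primes in $\mathcal E$'' should really be $O(n)$ (still negligible), and the asserted sharpening $M(x)=Cx^2+O(x\log x)$ is exactly what Lemmas~\ref{lem:Lambda} and~\ref{lem:Phi} deliver, the $|\alpha|=|\beta|$ case indeed resting on linear forms in logarithms. Second, and more interestingly, your variance argument is different from and somewhat sharper than the paper's: the paper bounds each covariance by $\delta n/[\varrho(m),\varrho(\ell)]$ (Lemma~\ref{lem:EIA}) and estimates the resulting gcd sum, getting $\mathbb{V}(X)\ll\delta n^3\log n$ (Lemma~\ref{lem:VX}); your observation that the indicators $X_d$ are mutually independent within each block $\lfloor n/d\rfloor=j$, combined with Minkowski's inequality across blocks, gives $\mathbb{V}(S)\ll\delta n^3$ with no logarithm. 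The paper's route is shorter and needs no structural input, but the lost $\log n$ is precisely why the hypothesis $\delta n/\log n\to+\infty$ appears in its Chebyshev step; your route would let the concentration of the main term go through under $\delta n\to+\infty$ alone, with the hypothesis surviving only in your (improvable) bound $\mathbb{E}R=O(n\log n)$.
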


When $\delta = 1/2$ all the subsets $A \subseteq \{1, \dots, n\}$ are chosen by $B(n, \delta)$ with the same probability.
Hence, Theorem~\ref{thm:Main} together with the identity $\Li_2(\tfrac1{2}) = (\pi^2 - 6(\log 2)^2) / 12$ (see, e.g.,~\cite{MR2290758}) give the following result.

\begin{cor}
As $n \to +\infty$, we have
\begin{equation*}
\lcm(u_a : a \in A) \sim \frac1{4}\left(1 - \frac{6(\log 2)^2}{\pi^2}\right) \cdot \log\!\left|\frac{\alpha}{\sqrt{(a_1^2, a_2)}}\right| \cdot n^2 ,
\end{equation*}
uniformly for all sets $A \subseteq \{1, \dots, n\}$, but at most $o(2^n)$ exceptions.
\end{cor}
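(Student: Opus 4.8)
The corollary is the special case $\delta=\tfrac12$ of Theorem~\ref{thm:Main}: since $B(n,\tfrac12)$ gives every subset of $\{1,\dots,n\}$ the probability $2^{-n}$, ``probability $1-o(1)$'' becomes ``all but $o(2^n)$ sets $A$'', and the constant simplifies to $\tfrac14\bigl(1-\tfrac{6(\log 2)^2}{\pi^2}\bigr)\log\bigl|\alpha/\!\sqrt{(a_1^2,a_2)}\bigr|$ once one inserts $\Li_2(\tfrac12)=\tfrac1{12}(\pi^2-6(\log 2)^2)$. Thus the content is entirely in Theorem~\ref{thm:Main}, and the plan is as follows. Write $c_0:=\log\bigl|\alpha/\!\sqrt{(a_1^2,a_2)}\bigr|$ (which is positive because $\alpha/\beta$ is not a root of unity), and, after replacing $u_k$ by $u_k/u_1$ (which only multiplies every $\lcm$ by $|u_1|$), assume $u_1=1$; for $k\ge1$ let $\Phi_k$ be the primitive part of $u_k$, i.e.\ the product of the $p^{v_p(u_k)}$ over primes $p$ of rank of apparition exactly $k$. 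Starting from $\log\lcm(u_a:a\in S)=\sum_{p}\sum_{j\ge1}\mathbf{1}[\exists a\in S:v_p(u_a)\ge j]\log p$ and using that, apart from $p=2$ and the finitely many primes dividing $a_1a_2(a_1^2+4a_2)$ (contributing $O(n)$ in total), the set $\{a:v_p(u_a)\ge j\}$ consists of the multiples of $\rho(p)\,p^{\max(0,\,j-v_p(u_{\rho(p)}))}$, one separates the terms with $j\le v_p(u_{\rho(p)})$ from the rest (the latter $\le\psi(n)=O(n)$ by Chebyshev) to obtain, uniformly over $S\subseteq\{1,\dots,n\}$,
\[
\log\lcm(u_a:a\in S)=\sum_{\substack{1\le k\le n\\ S\cap k\mathbb{Z}\ne\varnothing}}\log\Phi_k+O(n).
\]
With $S=\{1,\dots,x\}$, Theorem~\ref{thm:Akiyama} gives $\sum_{k\le x}\log\Phi_k=\tfrac{3c_0}{\pi^2}x^2+o(x^2)$, hence $\sum_{k\le x}(\log\Phi_k-c_0\phi(k))=o(x^2)$ by the classical $\sum_{k\le x}\phi(k)=\tfrac{3}{\pi^2}x^2+O(x\log x)$. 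This reduction, and the effective bounds on $|u_k|$ for $|\alpha|=|\beta|$ (relying on Baker's theorem on linear forms in logarithms) needed for it, amount to the arithmetic machinery of Akiyama and Tropak and can be taken from their work.

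Next, to compute the expectation, fix $A$ in $B(n,\delta)$, set $\xi_k:=\mathbf{1}[A\cap k\mathbb{Z}\ne\varnothing]$ and $X:=\sum_{k\le n}\xi_k\log\Phi_k$, so $\log\lcm(u_a:a\in A)=X+O(n)$, and note that $p_k:=\mathbb{P}[\xi_k=1]=1-(1-\delta)^{\lfloor n/k\rfloor}$ is non-increasing in $k$. By this monotonicity, Abel summation against $\sum_{k\le x}(\log\Phi_k-c_0\phi(k))=o(x^2)$ yields $\mathbb{E}X=c_0\sum_{k\le n}p_k\phi(k)+o(\delta n^2)$. To evaluate $\sum_{k\le n}p_k\phi(k)$, put $q:=1-\delta$; the identity $1-q^m=(1-q)\sum_{0\le i<m}q^i$ and a swap of summation turn it into
\[
(1-q)\sum_{i\ge0}q^i\!\!\!\sum_{k\le n/(i+1)}\!\!\!\phi(k)=\frac{3n^2}{\pi^2}(1-q)\sum_{i\ge0}\frac{q^i}{(i+1)^2}+O(n\log n)=\frac{3n^2}{\pi^2}\cdot\frac{\delta\,\Li_2(1-\delta)}{1-\delta}+O(n\log n),
\]
using $\sum_{i\ge0}q^i/(i+1)^2=\Li_2(q)/q$. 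Under the hypothesis $\delta n/\log n\to\infty$ the error is $o(\delta n^2)$, while the main term is of order $\delta n^2$; hence $\mathbb{E}X\sim\tfrac{\delta\,\Li_2(1-\delta)}{1-\delta}\cdot\tfrac{3c_0}{\pi^2}\,n^2$.

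It then remains to show that $X$ concentrates at its mean. By inclusion-exclusion, $\operatorname{Cov}(\xi_k,\xi_l)=(1-\delta)^{\lfloor n/k\rfloor+\lfloor n/l\rfloor-\lfloor n/\lcm(k,l)\rfloor}-(1-\delta)^{\lfloor n/k\rfloor+\lfloor n/l\rfloor}$, which vanishes unless $\lcm(k,l)\le n$ and is then $\le(1-\delta)^{\max(\lfloor n/k\rfloor,\lfloor n/l\rfloor)}\cdot\delta n/\lcm(k,l)$. Using $\log\Phi_k\le\log|u_k|=O(k)$ and writing $k=ga$, $l=gb$ with $\gcd(a,b)=1$, the sum $\operatorname{Var}(X)=\sum_{k,l}\log\Phi_k\log\Phi_l\operatorname{Cov}(\xi_k,\xi_l)$ collapses, after summing out the free variable, to $\operatorname{Var}(X)\ll\delta n^2\sum_{c\le n}\sigma(c)/c\ll\delta n^3=o\bigl((\mathbb{E}X)^2\bigr)$, since $\delta n\to\infty$. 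Chebyshev's inequality then gives $X\sim\mathbb{E}X$ with probability $1-o(1)$, and combining this with $\log\lcm(u_a:a\in A)=X+O(n)$ and $\mathbb{E}X\asymp\delta n^2\gg n$ completes the proof. I expect the real difficulty to lie not in these moment estimates---routine, provided one keeps every error term $o(\delta n^2)$ across the whole range $\delta n/\log n\to\infty$---but in the first step: packaging the primitive-divisor theory of $(u_k)$, with its content factor $\sqrt{(a_1^2,a_2)}$, the prime $2$, and the exceptional primes, into the clean uniform identity displayed above.
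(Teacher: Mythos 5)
Your handling of the corollary itself is exactly the paper's: at $\delta=\tfrac12$ the model $B(n,\tfrac12)$ is the uniform measure on subsets of $\{1,\dots,n\}$, so ``probability $1-o(1)$'' becomes ``all but $o(2^n)$ exceptions'', and $\Li_2(\tfrac12)=(\pi^2-6(\log 2)^2)/12$ turns the constant of Theorem~\ref{thm:Main} into the stated one. The rest of your write-up is a sketch of Theorem~\ref{thm:Main}, and there you keep the paper's probabilistic skeleton --- indicators of ``$A$ meets $k\mathbb{Z}$'', first and second moments, Chebyshev; your covariance identity and the bound $\ll\delta n/[k,l]$ are precisely Lemmas~\ref{lem:EIA} and~\ref{lem:VX} --- but you swap out its arithmetic engine. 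The paper proves the pointwise estimate $\sum_{\varrho(m)=r}\Lambda(m)=\varphi(r)\log|\zeta|+O(\tau(r)\log(r+1))$ (Lemma~\ref{lem:Lambda}, via Stewart's lemmas on Lehmer sequences and linear forms in logarithms) and feeds it directly into $\mathbb{E}(X)$; you instead invoke the deterministic Theorem~\ref{thm:Akiyama} as a black box to get only the averaged statement $\sum_{k\le x}(\log\Phi_k-c_0\varphi(k))=o(x^2)$ and transfer it to the weighted sum by partial summation against the monotone weights $p_k$. That is a legitimate and economical alternative (it outsources the primitive-divisor analysis to the cited deterministic result), at the price of losing the explicit error term $O(\delta n(\log n)^3)$ that the paper obtains. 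The one step you must write out is the transfer: monotonicity of $p_k$ alone bounds $\sum_{k}(p_k-p_{k+1})|E(k)|$ by $\varepsilon n^2p_1$, which is $o(n^2)$ but \emph{not} $o(\delta n^2)$ when $\delta\to0$; you need a second summation by parts, $\sum_k(p_k-p_{k+1})k^2\ll\sum_k p_k\,k\ll\delta n^2$ using $p_k\le\delta n/k$ (Lemma~\ref{lem:Bernoulli}), to land the error where you need it. (Minor point: your variance bound $\delta n^3$ without a logarithm looks optimistic, since $\sum_{k,l\le n}(k,l)\asymp n^2\log n$; but $\delta n^3\log n$, as in Lemma~\ref{lem:VX}, is all that the hypothesis $\delta n/\log n\to\infty$ requires, so nothing is lost.)
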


\section{Notation}

We employ the Landau--Bachmann ``Big Oh'' and ``little oh'' notations $O$ and $o$, as well as the associated Vinogradov symbols $\ll$ and $\gg$, with their usual meanings.
Any dependence of the implied constants is explicitly stated or indicated with subscripts.
For real random variables $X$ and $Y$, we say that ``$X \sim Y$ with probability $1 - o(1)$'' if $\mathbb{P}\big(|X - Y| \geq \varepsilon|Y|\big) = o_\varepsilon(1)$ for every $\varepsilon > 0$.
We write $\lcm(S)$ for the lowest common multiple of the elements of $S \subseteq \mathbb{Z}$, with the convention $\lcm(\varnothing) := 1$.
We also let $[a,b]$ and $(a,b)$ denote the lowest common multiple and the greatest common divisor, respectively, of two integers $a$ and $b$.
Throughout, the letters $p$ is reserved for prime numbers, and $\nu_p$ denotes the $p$-adic valuation.
As usual, we write $\Lambda(n)$, $\varphi(n)$, $\tau(n)$, and $\mu(n)$, for the von~Mangoldt function, the Euler's totient function, the number of divisors, and the M\"obius function of a positive integer $n$, respectively.

\section{Preliminaries on Lehmer sequences}

Let $\zeta$ and $\eta$ be complex numbers such that $c_1 := (\zeta + \eta)^2$ and $c_2 := \zeta \eta$ are nonzero coprime integers and $\zeta / \eta$ is not a root of unity.
Also, assume $|\zeta| \geq |\eta|$.
The~\emph{Lehmer~sequence} $(\widetilde{u}_k)_{k \geq 0}$ associated to $\zeta$ and $\eta$ is defined~by
\begin{equation}\label{equ:Lehmer}
\widetilde{u}_k := \begin{cases} (\zeta^k - \eta^k) / (\zeta - \eta) & \text{ if $k$ is odd}, \\ (\zeta^k - \eta^k) / (\zeta^2 - \eta^2) & \text{ if $k$ is even},\end{cases}
\end{equation}
for every integer $k \geq 0$.
It is known that $(\widetilde{u}_k)_{k \geq 1}$ is an integer sequence.
For every positive integer $m$ coprime with $c_2$, let $\varrho(m)$ be the \emph{rank of appearance} of $m$ in the Lehmer sequence $(\widetilde{u}_k)_{k \geq 0}$, that is, the smallest positive integer $k$ such that $m \mid \widetilde{u}_k$.
It is known that $\varrho(m)$ exists.
Moreover, for every prime number $p$ not dividing $c_2$, put $\kappa(p) := \nu_p(\widetilde{u}_{\varrho(p)})$.

We need the following properties of the rank of appearance.

\begin{lem}\label{lem:Rank}
We have:
\begin{enumerate}[(i)]
\setlength\itemsep{0.5em}
\item $m \mid \widetilde{u}_k$ if and only if $(m, c_2) = 1$ and $\varrho(m) \mid k$, for all integers $m, k \geq 1$.
\item $\varrho(p^k) = p^{\max(k - \kappa(p))} \varrho(p)$, for all primes $p$ not dividing $2c_2$ and all integers $k \geq 1$.
\item $\varrho(2^k) = 2^{\max(k - \nu_2(\widetilde{u}_{\varrho(4)}))} \varrho(4)$, for all integers $k \geq 2$.
\end{enumerate}
\end{lem}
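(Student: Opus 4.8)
The plan is to reduce all three parts to the behaviour of the rank $\varrho$ on prime powers, the two engines being the \emph{law of apparition} of a prime in a Lehmer sequence and a \emph{lifting-the-exponent} formula. Both are classical (Lehmer, Durst, Ward; see also Stewart and Ballot), and their proofs mirror the corresponding Lucas-sequence statements once the parity split in~\eqref{equ:Lehmer} is accommodated.

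For~(i), I would first note that no prime $p \mid c_2$ divides any term: for $k$ odd, $\widetilde{u}_k = (\zeta^k - \eta^k)/(\zeta - \eta)$ is an integer polynomial in $c_1 = (\zeta+\eta)^2$ and $c_2 = \zeta\eta$ whose term of $c_2$-degree zero is $(\zeta+\eta)^{k-1} = c_1^{(k-1)/2}$, and for $k$ even, $\widetilde{u}_k = (\zeta^k - \eta^k)/(\zeta^2 - \eta^2)$ is the Lucas value $U_{k/2}(c_1 - 2c_2,\, c_2^2)$, which modulo $p$ collapses to $U_{k/2}(c_1, 0) = c_1^{k/2-1}$; in both cases the reduction is a power of $c_1$, nonzero because $(c_1,c_2)=1$. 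Hence $(m, c_2) > 1$ forces $m \nmid \widetilde{u}_k$, which is~(i) in that case. When $(m, c_2) = 1$ it suffices to show that $\{k \ge 1 : m \mid \widetilde{u}_k\} \cup \{0\} = \varrho(m)\mathbb{Z}$. For $m = p$ prime this is the law of apparition ($c_2$ invertible modulo $p$ makes $(\widetilde{u}_k)$ purely periodic modulo $p$, and the vanishing indices form a subgroup via the Lehmer addition formulas); granting the lifting-the-exponent input below it passes to prime powers; and for $m = \prod_i p_i^{e_i}$ coprime to $c_2$ one has $m \mid \widetilde{u}_k \iff p_i^{e_i} \mid \widetilde{u}_k$ for all $i \iff \varrho(p_i^{e_i}) \mid k$ for all $i \iff \lcm_i \varrho(p_i^{e_i}) \mid k$, so $\varrho(m) = \lcm_i \varrho(p_i^{e_i})$ and~(i) follows.

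For~(ii) and~(iii), the lifting-the-exponent input is: $\nu_p(\widetilde{u}_n) = \kappa(p) + \nu_p(n/\varrho(p))$ for every prime $p \nmid 2c_2$ with $\varrho(p) \mid n$, and $\nu_2(\widetilde{u}_n) = \nu_2(\widetilde{u}_{\varrho(4)}) + \nu_2(n/\varrho(4))$ whenever $\varrho(4) \mid n$. Now $\varrho(p^k)$ is the least $n$ with $\nu_p(\widetilde{u}_n) \ge k$; since $k \ge 1$ any such $n$ is a multiple $\varrho(p)\,t$ of $\varrho(p)$ (by the apparition law), so the condition becomes $\nu_p(t) \ge \max\{k - \kappa(p), 0\}$, whose least solution is $t = p^{\max\{k-\kappa(p),0\}}$; this gives~(ii), which read as ``$p^k \mid \widetilde{u}_n \iff \varrho(p^k) \mid n$'' also supplies the prime-power case of~(i) invoked above. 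For $p = 2$ and $k \ge 2$ the inequality $\nu_2(\widetilde{u}_n) \ge k \ge 2$ forces $\varrho(4) \mid n$, and the identical computation with $\varrho(4)$ and $\nu_2(\widetilde{u}_{\varrho(4)})$ in place of $\varrho(p)$ and $\kappa(p)$ yields~(iii).

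The main obstacle is establishing these two engine facts in the genuinely Lehmer setting, where the parity dichotomy of~\eqref{equ:Lehmer} makes the addition formulas — and especially the prime $2$ — delicate. The most economical route is to recognize the even-index subsequence $(\widetilde{u}_{2m})_{m \ge 0} = \big(U_m(c_1 - 2c_2,\, c_2^2)\big)_{m \ge 0}$ as a genuine Lucas sequence and transfer the classical apparition and lifting-the-exponent results through it, disposing of the odd indices and of $p = 2$ via the companion relations; alternatively one simply cites the Lehmer-sequence versions already in the literature.
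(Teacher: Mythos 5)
Your proposal is correct, and for parts (ii) and (iii) it is essentially the paper's argument: the paper derives $\nu_p(\widetilde{u}_{p^h\varrho(p)})=\kappa(p)+h$ by induction from Stewart's step-lemma ($p\mid\widetilde{u}_m$ implies $p\,\|\,\widetilde{u}_{pm}/\widetilde{u}_m$, and $4\mid\widetilde{u}_m$ implies $2\,\|\,\widetilde{u}_{2m}/\widetilde{u}_m$), which is just the incremental form of the closed lifting-the-exponent identity you invoke; the minimization over $t$ is then identical. Where you genuinely diverge is part (i). The paper gets the full statement for arbitrary $m$ in two lines from Stewart's strong gcd identity $(\widetilde{u}_k,\widetilde{u}_h)=\widetilde{u}_{(k,h)}$ together with $(\widetilde{u}_k,c_2)=1$: one inclusion by minimality of $\varrho(m)$, the other by direct substitution. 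You instead run a local-to-global argument — law of apparition for primes, lift to prime powers via LTE, then glue by coprimality so that $\varrho(m)=\lcm_i\varrho(p_i^{e_i})$ — and you prove $(\widetilde{u}_k,c_2)=1$ by an explicit reduction of $\widetilde{u}_k$ to a power of $c_1$ modulo $p\mid c_2$ rather than citing it. Your route is more self-contained in places (the $c_2$-coprimality computation, and the identification $\widetilde{u}_{2m}=U_m(c_1-2c_2,c_2^2)$ as a transfer device are nice) but longer and, as you acknowledge, leaves the apparition law and LTE for genuine Lehmer sequences — particularly the subgroup structure of vanishing indices and the prime $2$ — to be either worked out through the even-index Lucas subsequence or cited; the gcd identity subsumes all of that at once, which is precisely why the paper leans on it. One small point to watch in your ordering: for (iii) you need ``$4\mid\widetilde{u}_n\Rightarrow\varrho(4)\mid n$'' before the $2$-adic LTE anchored at $\varrho(4)$ can be applied, and in your scheme this is the $m=4$ instance of the apparition law, which is not literally the prime case nor yet delivered by (ii); it is covered by the literature you cite (or by the gcd identity), but it should be flagged rather than absorbed silently.
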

\begin{proof}
(i) We have $(\widetilde{u}_k, c_2) = 1$ for all integers $k \geq 1$~\cite[Lemma~1]{MR0491445}.
Also, $(\widetilde{u}_k, \widetilde{u}_h) = \widetilde{u}_{(k,h)}$ for all integers $k,h \geq 1$~\cite[Lemma~3]{MR0491445}.
Hence, on the one hand, if $m \mid \widetilde{u}_k$ then $(m, c_2) = 1$ and $m \mid (\widetilde{u}_k, \widetilde{u}_{\varrho(m)}) = \widetilde{u}_{(k, \varrho(m))}$, which in turn implies that $\varrho(m) \mid k$, by the minimality of $\varrho(m)$.
On~the other hand, if $(c_2, m) = 1$ and $\varrho(m) \mid k$ then $m \mid \widetilde{u}_{\varrho(m)} = \widetilde{u}_{(k, \varrho(m))} = (\widetilde{u}_k, \widetilde{u}_{\varrho(m)})$, so that $m \mid \widetilde{u}_k$.

(ii) If $p \mid \tilde{u}_m$, for some positive integer $m$, then $p \mid\mid \widetilde{u}_{pm} / \widetilde{u}_m$~\cite[Lemma~5]{MR0491445}.
Hence, it follows by induction on $h$ that $\nu_p(\widetilde{u}_{p^h \varrho(p)}) = \kappa(p) + h$, for every integer $h \geq 0$.
At this point, the claim follows easily from (i).

(iii) If $4 \mid \tilde{u}_m$, for some positive integer $m$, then $2 \mid\mid \widetilde{u}_{pm} / \widetilde{u}_m$~\cite[Lemma~5]{MR0491445}.
The proof proceeds similarly to the previous point.
\end{proof}

Hereafter, in~light of Lemma~\ref{lem:Rank}(i), in subscripts of sums and products the argument of $\varrho$ is always tacitly assumed to be coprime with $c_2$.

Let us define the cyclotomic numbers $(\phi_k)_{k \geq 1}$ associated to $\zeta$ and $\eta$ by
\begin{equation}\label{equ:phik}
\phi_k := \prod_{\substack{1 \,\leq\, h \,\leq\, k \\ (h,k) \,=\, 1}} \left(\zeta - \mathrm{e}^{\frac{2\pi\mathbf{i}h}{k}} \eta\right) ,
\end{equation}
for every integer $k \geq 0$.
It can be proved that $\phi_k \in \mathbb{Z}$ for every integer $k \geq 3$.
Moreover, from~\eqref{equ:phik} it follows easily that
\begin{equation*}
\zeta^k - \eta^k = \prod_{d \,\mid\, k} \phi_d ,
\end{equation*}
which in turn, applying M\"obius inversion formula and taking into account~\eqref{equ:Lehmer}, gives
\begin{equation}\label{equ:Inversion}
\phi_k = \prod_{d \,\mid\, k} \left(\zeta^d - \eta^d\right)^{\mu(k / d)} = \prod_{d \,\mid\, k} \widetilde{u}_d^{\,\mu(k / d)} ,
\end{equation}
for all integers $k \geq 3$.
We need the following result about $\phi_k$.

\begin{lem}\label{lem:Primitive}
For every integer $k \geq 13$, we have
\begin{equation*}
|\phi_k| = \lambda_k \cdot \prod_{\varrho(p) \,=\, k} p^{\kappa(p)} ,
\end{equation*}
where $\lambda_k$ is equal to $1$ or to the greatest prime factor of $k / (k, 3)$.
\end{lem}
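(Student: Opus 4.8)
The plan is to analyze the prime factorization of $|\phi_k|$ directly from the product formula~\eqref{equ:Inversion}, $\phi_k = \prod_{d \mid k} \widetilde{u}_d^{\,\mu(k/d)}$, by computing $\nu_p(\phi_k)$ for each prime $p$. First I would separate primes into those dividing $c_2$ and those coprime with $c_2$. For $p \mid c_2$ we have $(\widetilde u_d, c_2)=1$ for all $d\ge 1$ by Lemma~\ref{lem:Rank}(i) (or rather the fact cited in its proof), so $\nu_p(\phi_k)=0$ and such primes contribute nothing. For $p \nmid c_2$, using~\eqref{equ:Inversion} and the multiplicativity of $\nu_p$, one gets
\begin{equation*}
\nu_p(\phi_k) = \sum_{d \mid k} \mu(k/d)\,\nu_p(\widetilde u_d).
\end{equation*}
By Lemma~\ref{lem:Rank}(i), $\nu_p(\widetilde u_d) > 0$ exactly when $\varrho(p) \mid d$; writing $r=\varrho(p)$ and combining with Lemma~\ref{lem:Rank}(ii)–(iii) to evaluate $\nu_p(\widetilde u_d)$ when $r\mid d$, the Möbius sum telescopes. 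The expected outcome is: if $r \nmid k$ then $\nu_p(\phi_k)=0$; if $\varrho(p)=k$ then $\nu_p(\phi_k)=\kappa(p)$ (this is the "primitive prime divisor" part, giving the product $\prod_{\varrho(p)=k}p^{\kappa(p)}$); and if $r\mid k$ but $r<k$ then $\nu_p(\phi_k)\in\{0,1\}$, with $\nu_p(\phi_k)=1$ possible only when $k/r$ is a prime power whose base is $p$ — i.e.\ only for one specific prime $p$ dividing $k$, namely the largest prime factor of $k$ (after stripping a factor related to $3$).

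The second, more delicate, step is to pin down the "extra" factor $\lambda_k$ and to show it is either $1$ or the greatest prime factor of $k/(k,3)$. This is essentially the classical analysis of intrinsic versus extrinsic prime divisors of $\widetilde u_k$ going back to work on Lucas and Lehmer sequences (Birkhoff–Vandiver, Carmichael, Stewart); the point is that a prime $p$ with $\varrho(p)=r$ can divide $\phi_k$ with $k = p^a r$ for some $a\ge 1$, and a short calculation with Lemma~\ref{lem:Rank}(ii)–(iii) shows the $p$-adic valuation of $\phi_k$ in that case is exactly $1$. Such a $p$ must satisfy $p \mid k$, and a counting argument (comparing $\deg$, i.e.\ the number of factors $\varphi(k)$ in the product~\eqref{equ:phik}, against the archimedean size $|\zeta-\eta|^{\varphi(k)}\cdot(\text{bounded})$) forces there to be at most one such $p$, and it must be the largest prime factor of $k/(k,3)$ — the factor $(k,3)$ enters because of the small-discriminant exceptional behaviour at $p=3$ (the case $r = k/3$ with $p=3$). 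The hypothesis $k \ge 13$ is what rules out the sporadic small exceptions where several extrinsic primes could appear or where $\phi_k$ could even fail to have a primitive prime divisor at all.

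I would organize the write-up as: (1) reduce to computing $\nu_p(\phi_k)$ via~\eqref{equ:Inversion}; (2) handle $p\nmid c_2$ with $\varrho(p) = k$, obtaining the contribution $\kappa(p)$; (3) handle $p \nmid c_2$ with $\varrho(p) \mid k$, $\varrho(p) < k$, showing $\nu_p(\phi_k)\le 1$ with equality only in the stated special configuration; (4) conclude that the product of these "extra" contributions is $\lambda_k$ with $\lambda_k \in \{1, P(k/(k,3))\}$, where $P(\cdot)$ denotes the largest prime factor. The main obstacle is step (3)–(4): making the bookkeeping in Lemma~\ref{lem:Rank}(ii)–(iii) precise enough to get the exact valuation $1$ (not just "bounded"), and correctly isolating the role of the prime $3$ so that the exceptional factor is exactly $k/(k,3)$ rather than $k$ itself. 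Since this is standard material on Lehmer sequences, I would cite the relevant lemmas of Stewart~\cite{MR0491445} (and the surrounding literature) for the parts that are routine, and only carry out in detail the Möbius-sum computation that produces the clean statement above.
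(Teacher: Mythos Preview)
Your overall plan is sound and would succeed, but the paper's proof is considerably shorter because it organizes the extrinsic-prime step differently. Like you, the paper first observes via~\eqref{equ:Inversion} that if $\varrho(p)=k$ then $\nu_p(\phi_k)=\nu_p(\widetilde u_{\varrho(p)})=\kappa(p)$ (only the top term in the M\"obius sum is nonzero). For a prime $p\mid\phi_k$ with $\varrho(p)\neq k$, however, the paper does \emph{not} carry out the detailed telescoping of $\sum_{d\mid k}\mu(k/d)\nu_p(\widetilde u_d)$ using Lemma~\ref{lem:Rank}(ii)--(iii). Instead it simply notes that then $p\mid(\phi_{\varrho(p)},\phi_k)$ and invokes Stewart's result~\cite[Lemma~7]{MR0491445}: for $k\geq 13$ and any $h\geq 3$ with $h\neq k$, the gcd $(\phi_h,\phi_k)$ divides the greatest prime factor of $k/(k,3)$. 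This single citation simultaneously forces the extrinsic prime to be unique, identifies it as $P(k/(k,3))$, and caps its valuation at~$1$ --- exactly the content of your steps~(3)--(4).

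One point to flag: the ``counting argument comparing $\varphi(k)$ against the archimedean size'' that you sketch is not what pins down $\lambda_k$. Since $|\phi_k|\asymp|\zeta|^{\varphi(k)}$ is exponentially large, a size comparison alone cannot limit the extrinsic contribution to a single prime; the uniqueness and the identification with $P(k/(k,3))$ are arithmetic facts coming from the divisibility structure (encoded in Stewart's gcd lemma, or equivalently in the constraint that $k/\varrho(p)$ be a $p$-power together with $\varrho(p)\mid p\pm 1$). Since you already plan to cite Stewart for this, the issue is cosmetic rather than fatal, but you should drop the archimedean heuristic and go straight to~\cite[Lemma~7]{MR0491445} as the paper does.
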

\begin{proof}
Let $p$ be a prime number not dividing $c_2$.
By the definition of $\varrho(p)$, we have that $p \nmid \widetilde{u}_h$ for each positive integer $h < \varrho(p)$.
Hence, by~\eqref{equ:Inversion}, we obtain that $\nu_p(\phi_{\varrho(p)}) = \nu_p(\widetilde{u}_{\varrho(p)}) = \kappa(p)$.
In particular, $p \mid \phi_{\varrho(p)}$.
Let $k \geq 3$ be an integer and suppose that $p$ is a prime factor of $\phi_k$.
On the one hand, if $\varrho(p) = k$ then, by the previous consideration, $\nu_p(\phi_k) = \kappa(p)$.
On the other hand, if $\varrho(p) \neq k$ then $p \mid (\phi_{\varrho(p)}, \phi_k)$.
Finally, for $k \geq 13$ and for every integer $h \geq 3$ with $h \neq k$, we have that $(\phi_h, \phi_k)$ divides the greatest prime factor of $k / (k,3)$~\cite[Lemma~7]{MR0491445}.
\end{proof}

We conclude this section with a formula for a sum involving the von~Mangoldt function.

\begin{lem}\label{lem:Lambda}
We have
\begin{equation}\label{equ:Lambdasum}
\sum_{\varrho(m) \,=\, r} \Lambda(m) = \varphi(r) \log|\zeta| + O_{\zeta, \eta}\!\left(\tau(r)\log (r+1)\right) ,
\end{equation}
and, in particular,
\begin{equation}\label{equ:Lambdabound}
\sum_{\varrho(m) \,=\, r} \Lambda(m) \ll_{\zeta, \eta} \varphi(r) ,
\end{equation}
for every positive integer $r$.
\end{lem}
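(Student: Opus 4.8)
The plan is to separate the prime powers $p^j$ with $\varrho(p^j)=r$ into those for which $\varrho(p)=r$ (the \emph{primitive} contribution) and those for which $\varrho(p)<r$ (the \emph{imprimitive} contribution), to evaluate the primitive part through Lemma~\ref{lem:Primitive} and the cyclotomic numbers $\phi_r$, and to bound the imprimitive part trivially. Since $\Lambda$ is supported on prime powers, $\sum_{\varrho(m)=r}\Lambda(m)=\sum_{p^j:\,\varrho(p^j)=r}\log p$, and by Lemma~\ref{lem:Rank}(ii)--(iii) one has $\varrho(p^j)=p^{\max(j-\kappa(p),0)}\varrho(p)$ (with the usual modification for $p=2$). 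Hence $\varrho(p^j)=r>\varrho(p)$ forces $j>\kappa(p)$ and $p^{j-\kappa(p)}=r/\varrho(p)$; in particular $p\mid r$ and $j$ is determined by $p$, so the imprimitive contribution is at most $\sum_{p\mid r}\log p\le\log r$. For the primitive part, if $\varrho(p)=r$ then $\varrho(p^j)=r$ exactly for $1\le j\le\kappa(p)$, so this part equals $\sum_{\varrho(p)=r}\kappa(p)\log p$. For $r\ge 13$ I would invoke Lemma~\ref{lem:Primitive} to rewrite this as $\log|\phi_r|-\log\lambda_r=\log|\phi_r|+O(\log(r+1))$, using $1\le\lambda_r\le r$; the finitely many $r\le 12$ are handled directly, since then both sides of~\eqref{equ:Lambdasum} are $O_{\zeta,\eta}(1)$ (the sum being over divisors of the fixed integer $\widetilde{u}_r$). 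This reduces~\eqref{equ:Lambdasum} to showing $\log|\phi_r|=\varphi(r)\log|\zeta|+O_{\zeta,\eta}(\tau(r)\log(r+1))$.

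Next I would expand $\log|\phi_r|$ using the first identity in~\eqref{equ:Inversion} together with $|\zeta^d-\eta^d|=|\zeta|^d\,\bigl|1-(\eta/\zeta)^d\bigr|$:
\[ \log|\phi_r|=\sum_{d\mid r}\mu(r/d)\log|\zeta^d-\eta^d|=\Bigl(\sum_{d\mid r}\mu(r/d)\,d\Bigr)\log|\zeta|+\Theta_r,\qquad \Theta_r:=\sum_{d\mid r}\mu(r/d)\log\bigl|1-(\eta/\zeta)^d\bigr|. \]
The standard identity $\sum_{d\mid r}\mu(r/d)\,d=\varphi(r)$ produces the main term $\varphi(r)\log|\zeta|$, and $|\Theta_r|\le\tau(r)\max_{d\le r}\bigl|\log|1-(\eta/\zeta)^d|\bigr|$, so everything comes down to the estimate $\bigl|\log|1-(\eta/\zeta)^d|\bigr|\ll_{\zeta,\eta}\log(d+1)$ for all $d\ge 1$.

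I expect this last estimate to be the only real obstacle. The upper bound $|1-(\eta/\zeta)^d|\le 2$ is trivial. For the lower bound, if $|\zeta|>|\eta|$ then $|1-(\eta/\zeta)^d|\ge 1-|\eta/\zeta|>0$ uniformly in $d$; and if $|\zeta|=|\eta|$ with $|c_2|=1$ then $\log|\zeta|=0$ while $|1-(\eta/\zeta)^d|=|\zeta^d-\eta^d|\ge\min(|\zeta-\eta|,|\zeta^2-\eta^2|)>0$, because $\widetilde{u}_d$ is a nonzero rational integer and $\zeta/\eta\notin\{1,-1\}$. In the remaining case $|\zeta|=|\eta|>1$ one needs a lower bound of the shape $|(\zeta/\eta)^d-1|\gg_{\zeta,\eta}d^{-C}$, which follows from the standard lower bound for linear forms in two logarithms (Baker's theorem), since $\zeta/\eta$ is algebraic and, by hypothesis, not a root of unity. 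Feeding $|\Theta_r|\ll_{\zeta,\eta}\tau(r)\log(r+1)$ back through the chain above yields~\eqref{equ:Lambdasum}; and~\eqref{equ:Lambdabound} then follows because $\varphi(r)\log|\zeta|\ll_{\zeta,\eta}\varphi(r)$ and $\tau(r)\log(r+1)=r^{o(1)}\ll\varphi(r)$.
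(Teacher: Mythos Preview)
Your proof is correct and follows essentially the same route as the paper's: split the prime-power contributions via Lemma~\ref{lem:Rank} into the primitive part $\sum_{\varrho(p)=r}\kappa(p)\log p$ and an imprimitive part bounded by $\sum_{p\mid r}\log p$, identify the primitive part with $\log|\phi_r|$ through Lemma~\ref{lem:Primitive}, expand $\log|\phi_r|$ via the M\"obius-inverted product~\eqref{equ:Inversion} to extract $\varphi(r)\log|\zeta|$, and control the remainder $\Theta_r$ using Baker-type lower bounds when $|\eta/\zeta|=1$. Your separate elementary treatment of the sub-case $|\zeta|=|\eta|=1$ (where $\widetilde u_d\in\mathbb{Z}\setminus\{0\}$ already furnishes a uniform lower bound for $|\zeta^d-\eta^d|$) is a pleasant refinement but not needed---the paper simply invokes linear forms in logarithms uniformly across the case $|\eta/\zeta|=1$.
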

\begin{proof}
Clearly, we can assume $r \geq 13$.
Write $m = p^k$, where $p$ is a prime number not dividing $c_2$ and $k$ is a positive integer.
First, suppose that $p > 2$.
By Lemma~\ref{lem:Rank}(ii), we have that $\varrho(m) = p^{\max(k - \kappa(p), 0)} \varrho(p)$.
Hence, $\varrho(m) = r$ if and only if $k \leq \kappa(p)$ and $\varrho(p) = r$, or $k > \kappa(p)$ and $p^{k - \kappa(p)} \varrho(p) = r$.
In the first case, the contribution to the sum in~\eqref{equ:Lambdasum} is exactly $\kappa(p) \log p$.
In the second case, $p \mid r$ and, since $k$ is determined by $p$ and $r$, the contribution to the sum in~\eqref{equ:Lambdasum} is $\log p$.
Using Lemma~\ref{lem:Rank}(iii), the case $p = 2$ can be handled similarly.
Therefore,
\begin{equation}\label{equ:Lambda1}
\sum_{\varrho(m) \,=\, r} \Lambda(m) = \sum_{\varrho(p) \,=\, r} \kappa(p) \log p + O\!\left(\sum_{p \,\mid\, r} \log p\right) = \log |\phi_r| + O(\log r) ,
\end{equation}
where we used Lemma~\ref{lem:Primitive}.
Furthermore, from~\eqref{equ:Inversion} and the the identity $\sum_{d \,\mid\, r} \mu(r / d)\,d = \varphi(r)$, it follows that
\begin{equation*}
\log|\phi_r| = \varphi(r) \log |\zeta| + O\!\left(\sum_{d \,\mid\, r} \log\!\Big|1 - \big(\tfrac{\eta}{\zeta}\big)^d\Big|\right) .
\end{equation*}
If $|\eta / \zeta| < 1$ then $\log\!\big|1 - (\eta / \zeta)^d\big| = O_{\zeta,\eta}(1)$.
If $|\eta / \zeta| = 1$ then, since $\eta / \zeta$ is an algebraic number that is not a root of unity, it follows from classic bounds on linear forms in logarithms (see, e.g.,~\cite[Lemma~3]{MR951903}) that $\log\!\big|1 - (\eta / \zeta)^d\big| = O_{\zeta,\eta}(\log(d + 1))$.
Consequently,
\begin{equation}\label{equ:Lambda2}
\log|\phi_r| = \varphi(r) \log |\zeta| + O_{\zeta, \eta}\!\left(\tau(r) \log (r + 1) \right) .
\end{equation}
Putting together~\eqref{equ:Lambda1} and~\eqref{equ:Lambda2}, we get~\eqref{equ:Lambdasum}.
Finally, the upper bound~\eqref{equ:Lambdabound} follows since $\tau(k) \leq k^{\varepsilon}$ and $\varphi(k) \geq k^{1 - \varepsilon}$, for all $\varepsilon > 0$ and every integer $k \gg_\varepsilon 1$~\cite[Ch.~I.5, Corollary~1.1 and Eq.~12]{Ten95}.
\end{proof}

\section{Further preliminaries}

We need two estimates involving the Euler's totient function.
Define
\begin{equation*}
\Phi(x) := \sum_{n \,\leq\, x} \varphi(n) ,
\end{equation*}
for every $x \geq 1$.

\begin{lem}\label{lem:Phi}
We have
\begin{equation*}
\Phi(x) = \frac{3}{\pi^2} \, x^2 + O(x \log x) \quad\text{and}\quad \sum_{n \,\leq\, x} \frac{\varphi(n)}{n} \ll x ,
\end{equation*}
for every $x \geq 2$.
\end{lem}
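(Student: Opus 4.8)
The plan is to deduce both estimates from the elementary convolution identity $\varphi(n) = \sum_{d \,\mid\, n} \mu(d)\,(n/d)$, valid for every positive integer $n$; this is the standard Dirichlet–hyperbola approach, and I expect no genuine obstacle, since Lemma~\ref{lem:Phi} is a classical result (see, e.g.,~\cite{Ten95}) and is recorded here only because the sequel needs it in this precise shape.

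For the first formula I would first swap the order of summation:
\[
\Phi(x) = \sum_{n \,\leq\, x}\, \sum_{d \,\mid\, n} \mu(d)\,\frac{n}{d} = \sum_{d \,\leq\, x} \mu(d) \sum_{m \,\leq\, x/d} m ,
\]
and then insert the elementary estimate $\sum_{m \,\leq\, y} m = \tfrac12 y^2 + O(y)$ with $y = x/d$. This gives
\[
\Phi(x) = \frac{x^2}{2} \sum_{d \,\leq\, x} \frac{\mu(d)}{d^2} + O\!\Bigg( x \sum_{d \,\leq\, x} \frac1{d} \Bigg) = \frac{x^2}{2} \sum_{d \,\leq\, x} \frac{\mu(d)}{d^2} + O(x\log x) .
\]
Next I would complete the M\"obius series: $\sum_{d \,\leq\, x} \mu(d)/d^2 = \sum_{d \,\geq\, 1} \mu(d)/d^2 + O(1/x) = 6/\pi^2 + O(1/x)$, using $\sum_{d \,\geq\, 1}\mu(d)/d^2 = 1/\zeta(2) = 6/\pi^2$ and the trivial tail bound $\sum_{d \,>\, x} d^{-2} \ll 1/x$. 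The resulting error $O(1/x)$ contributes $O(x)$, which is absorbed into the $O(x\log x)$ already present, and one arrives at $\Phi(x) = \tfrac{3}{\pi^2}x^2 + O(x\log x)$.

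For the second formula the shortest route is Abel (partial) summation applied to the first estimate: since $\Phi(t) \ll t^2$ for all $t \geq 1$,
\[
\sum_{n \,\leq\, x} \frac{\varphi(n)}{n} = \frac{\Phi(x)}{x} + \int_1^x \frac{\Phi(t)}{t^2}\,\mathrm{d}t \ll x + \int_1^x 1\,\mathrm{d}t \ll x .
\]
Alternatively, one can argue directly: from $\varphi(n)/n = \sum_{d \,\mid\, n}\mu(d)/d$ and swapping summation, the sum equals $\sum_{d \,\leq\, x}(\mu(d)/d)\lfloor x/d\rfloor$, whose absolute value is at most $x\sum_{d \,\geq\, 1} d^{-2} \ll x$. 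Either way the bound is immediate; the only point requiring any care is keeping the logarithmic error in the first estimate honest, which the computation above does automatically.
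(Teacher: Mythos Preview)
Your argument is correct. For the first formula you give in detail the standard M\"obius-inversion proof, which is exactly what the cited reference~\cite[Ch.~I.3, Thm.~4]{Ten95} does; the paper simply invokes that reference rather than reproducing the computation.

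For the second estimate you take a different route from the paper. The paper splits the range at $x/2$: for $n \leq x/2$ one uses the trivial bound $\varphi(n)/n \leq 1$, while for $x/2 < n \leq x$ one bounds $\varphi(n)/n \leq 2\varphi(n)/x$ and then appeals to $\Phi(x) - \Phi(x/2) \ll x^2$. Your Abel-summation argument and your direct convolution argument are equally valid alternatives; in fact your second alternative (bounding $\sum_{d \leq x} (\mu(d)/d)\lfloor x/d\rfloor$ by $x\sum_{d \geq 1} d^{-2}$) is slightly more self-contained, since it does not even require the first formula. All three approaches are one-line consequences of standard estimates, so the choice is largely a matter of taste; the paper's splitting trick has the minor advantage of avoiding any integral or tail-series manipulation.
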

\begin{proof}
The first formula is well known~\cite[Ch.~I.3, Thm.~4]{Ten95} and implies
\begin{equation*}
\sum_{n \,\leq\, x} \frac{\varphi(n)}{n} \leq \sum_{n \,\leq\, x / 2} 1 + \sum_{x/2 \,<\, n \,\leq\, x} \frac{\varphi(n)}{x/2} \ll x ,
\end{equation*}
as desired.
\end{proof}

The following lemma is an easy inequality that will be useful later.

\begin{lem}\label{lem:Bernoulli}
It holds $1 - (1 - x)^k \leq k x$, for all $x \in [0,1]$ and all integers $k \geq 0$.
\end{lem}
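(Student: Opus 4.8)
The plan is to reduce the claim to the classical Bernoulli inequality. Writing $t := -x$, the asserted bound $1 - (1-x)^k \le kx$ is equivalent to $(1+t)^k \ge 1 + kt$; since $x \in [0,1]$ we have $t \ge -1$, which is exactly the range in which Bernoulli's inequality holds for every integer $k \ge 0$. So a single invocation of Bernoulli's inequality finishes the proof.

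Should a self-contained argument be preferred, I would instead start from the telescoping factorization
\[ 1 - (1-x)^k = \bigl(1 - (1-x)\bigr)\sum_{j=0}^{k-1}(1-x)^j = x\sum_{j=0}^{k-1}(1-x)^j . \]
Because $x \in [0,1]$ forces $1-x \in [0,1]$, each summand $(1-x)^j$ lies in $[0,1]$, hence the sum is at most $k$; multiplying by the nonnegative number $x$ gives $1-(1-x)^k \le kx$, with the case $k=0$ being the empty sum $0 \le 0$. Equivalently, a two-line induction on $k$ works: split $1 - (1-x)^{k+1} = \bigl(1 - (1-x)^k\bigr) + x(1-x)^k$, bound $(1-x)^k \le 1$ using $1-x\in[0,1]$, and apply the inductive hypothesis to obtain $1-(1-x)^{k+1} \le kx + x = (k+1)x$.

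There is no real obstacle here. The only point deserving a word of care is that the hypothesis $x \le 1$ (i.e. $1-x \ge 0$) is genuinely needed to keep the powers $(1-x)^j$ bounded by $1$, while $x \ge 0$ is used only so that the final multiplication by $x$ preserves the inequality.
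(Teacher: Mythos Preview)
Your proposal is correct, and your primary approach---rewriting the inequality as $(1+(-x))^k \ge 1 + k(-x)$ and invoking Bernoulli's inequality---is exactly what the paper does. The telescoping and induction arguments you add are correct extras, but the core reduction matches the paper's one-line proof.
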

\begin{proof}
The claim is $(1 + (-x))^k \geq 1 + k(-x)$, which follows from Bernoulli's inequality.
\end{proof}

\section{Proof of Theorem~\ref{thm:Main}}

Henceforth, all the implied constants may depend by $a_1$, $a_2$, and $u_1$.
It is well known that the generalized Binet's formula
\begin{equation}\label{equ:Binet}
u_k = \frac{\alpha^k - \beta^k}{\alpha - \beta} \, u_1 ,
\end{equation}
holds for every integer $k \geq 0$.
We put $\zeta := \alpha / \!\!\;\sqrt{b}$ and $\eta := \beta / \!\!\;\sqrt{b}$, where $b := (a_1^2, a_2)$.
Note that indeed $c_1 = a_1^2 / b$ and $c_2 = -a_2 / b$ are nonzero relatively prime integers, $\zeta / \eta = \alpha / \beta$ is not a root of unity, and $|\zeta| \geq |\eta|$.
Moreover, from~\eqref{equ:Lehmer} and~\eqref{equ:Binet}, it follows easily that
\begin{equation*}
u_k = \begin{cases} b^{(k - 1) / 2}u_1 \widetilde{u}_k & \text{ if $k$ is odd}, \\ a_1 b^{k / 2 - 1}u_1 \widetilde{u}_k & \text{ if $k$ is even}, \end{cases}
\end{equation*}
for every integer $k \geq 0$.
Therefore, for every $A \subseteq \{1,\dots,n\}$, we have
\begin{equation*}
\log\lcm(u_a : a \in A) = \log\lcm(\widetilde{u}_a : a \in A) + O(n) .
\end{equation*}
Note that $O(n)$ is a ``little oh'' of the right-hand side of~\eqref{equ:Main}, as $\delta n / \log n \to +\infty$.
Hence, it is enough to prove Theorem~\ref{thm:Main} with $\log\lcm(\widetilde{u}_a : a \in A)$ in place of $\log\lcm(u_a : a \in A)$, and this will be indeed our strategy.

Hereafter, let $A$ be a random set in $B(n, \delta)$, and put $L := \lcm(\widetilde{u}_a : a \in A)$ and $X := \log L$.
For every positive integer $m$ coprime with $c_2$, let us define
\begin{equation*}
I_A(m) := \begin{cases}1 & \text{ if } \varrho(m) \mid a \text{ for some } a \in A,\\ 0 & \text{ otherwise.}\end{cases}
\end{equation*}
The following lemma gives an expression for $X$ in terms of $I_A$ and the von~Mangoldt function.

\begin{lem}\label{lem:XLIA}
We have
\begin{equation*}
X = \sum_{\varrho(m) \,\leq\, n} \Lambda(m)\,I_A(m) .
\end{equation*}
\end{lem}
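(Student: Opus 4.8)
The plan is to express $X = \log L$ as a sum over prime powers via the standard identity $\log L = \sum_{p^k \mid L} \log p = \sum_{p^k} \Lambda(p^k)\,[\,p^k \mid L\,]$, and then identify exactly when a prime power $m = p^k$ (coprime with $c_2$, as always tacitly assumed) divides $L = \lcm(\widetilde{u}_a : a \in A)$. First I would note that $m \mid L$ if and only if $m \mid \widetilde{u}_a$ for some $a \in A$. By Lemma~\ref{lem:Rank}(i), $m \mid \widetilde{u}_a$ is equivalent to $(m, c_2) = 1$ together with $\varrho(m) \mid a$. Hence, for $m$ coprime with $c_2$, we have $m \mid L$ precisely when $\varrho(m) \mid a$ for some $a \in A$, which is exactly the event recorded by $I_A(m) = 1$. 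Prime powers $m$ not coprime with $c_2$ never divide any $\widetilde{u}_a$ (again Lemma~\ref{lem:Rank}(i)), so they contribute nothing; this is consistent with the convention that the argument of $\varrho$ is tacitly coprime with $c_2$.

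Putting this together,
\begin{equation*}
X = \log L = \sum_{m \geq 1} \Lambda(m)\,[\,m \mid L\,] = \sum_{\varrho(m) \,\text{exists}} \Lambda(m)\,I_A(m),
\end{equation*}
where the sum is effectively over prime powers $m$ coprime with $c_2$, since $\Lambda$ is supported on prime powers and $\varrho(m)$ exists precisely for such $m$ coprime with $c_2$. It remains to restrict the range of summation. Since $\widetilde{u}_a$ for $a \in A \subseteq \{1, \dots, n\}$ depends only on indices $a \leq n$, and $\varrho(m) \mid a \leq n$ forces $\varrho(m) \leq n$, any $m$ with $\varrho(m) > n$ has $I_A(m) = 0$. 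Therefore the sum may be truncated to $\varrho(m) \leq n$, giving
\begin{equation*}
X = \sum_{\varrho(m) \,\leq\, n} \Lambda(m)\,I_A(m),
\end{equation*}
as claimed.

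There is essentially no analytic obstacle here; the content is purely the bookkeeping that ties the divisibility structure of $\lcm$ to the rank of appearance. The one point requiring a little care is the handling of prime powers $m$ sharing a factor with $c_2$: one must be sure that these are uniformly excluded, both from the event $m \mid L$ and (by convention) from the index set of the sum, so that the identity is literally correct and not merely correct up to a bounded error. Since $c_2$ is a fixed integer, this exclusion removes only finitely many primes and at most $O(\log n)$ worth of $\Lambda$-mass from indices below $n$ anyway, but with the tacit-coprimality convention in force the identity is exact as stated.
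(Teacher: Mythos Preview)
Your proof is correct and follows essentially the same route as the paper's: expand $\log L$ as $\sum_{p^k \mid L} \log p$, use Lemma~\ref{lem:Rank}(i) to identify $p^k \mid L$ with $I_A(p^k)=1$, observe that primes dividing $c_2$ never appear, and truncate to $\varrho(m)\leq n$ since $I_A(m)=0$ otherwise. The paper's argument is simply a terser version of the same bookkeeping.
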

\begin{proof}
For every prime power $p^k$ with $p \nmid c_2$, we know from Lemma~\ref{lem:Rank}(i) that $p^k \mid L$ if and only if $\varrho(p^k) \mid a$ for some $a \in A$ and, in~particular, $\varrho(p^k) \leq n$.
Hence,
\begin{equation*}
X = \sum_{p^k \,\mid\, L} \log p = \sum_{\varrho(p^k) \,\leq\, n} (\log p)\,I_A\big(p^k\big) = \sum_{\varrho(m) \,\leq\, n} \Lambda(m)\,I_A(m) ,
\end{equation*}
as claimed.
\end{proof}

The next lemma provides two expected values involving $I_A$ and needed in later arguments.

\begin{lem}\label{lem:EIA}
We have
\begin{equation}\label{equ:EIA1}
\mathbb{E}\big(I_A(m)\big) = 1 - (1 - \delta)^{\lfloor n / \varrho(m)\rfloor}
\end{equation}
and
\begin{align*}
\mathbb{E}\big(I_A(m)I_A(\ell)\big) = 1 - (1 - \delta)^{\lfloor n / \varrho(m)\rfloor} \;- &\;(1 - \delta)^{\lfloor n / \varrho(\ell)\rfloor} \\
&+ (1 - \delta)^{\lfloor n / \varrho(m)\rfloor + \lfloor n / \varrho(\ell)\rfloor - \lfloor n / [\varrho(m),\varrho(\ell)]\rfloor} ,
\end{align*}
for all positive integers $m$ and $\ell$ with $(m\ell, c_2) = 1$.
\end{lem}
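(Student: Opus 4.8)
The plan is to compute both expectations directly from the definition of the model $B(n,\delta)$, where each element of $\{1,\dots,n\}$ is included in $A$ independently with probability $\delta$. The key observation is that $I_A(m) = 1$ precisely when $A$ meets the set $M_m := \{k \in \{1,\dots,n\} : \varrho(m) \mid k\}$, which has cardinality $\lfloor n/\varrho(m)\rfloor$. Hence $I_A(m) = 0$ if and only if $A \cap M_m = \varnothing$, an event of probability $(1-\delta)^{|M_m|} = (1-\delta)^{\lfloor n/\varrho(m)\rfloor}$ by independence. Taking complements gives $\mathbb{E}(I_A(m)) = \mathbb{P}(I_A(m) = 1) = 1 - (1-\delta)^{\lfloor n/\varrho(m)\rfloor}$, which is~\eqref{equ:EIA1}.

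For the second identity I would use inclusion–exclusion on the product. Write $I_A(m)I_A(\ell) = 1 - (1 - I_A(m)) - (1 - I_A(\ell)) + (1 - I_A(m))(1 - I_A(\ell))$, so that
\begin{equation*}
\mathbb{E}\big(I_A(m)I_A(\ell)\big) = 1 - \mathbb{P}(A \cap M_m = \varnothing) - \mathbb{P}(A \cap M_\ell = \varnothing) + \mathbb{P}(A \cap (M_m \cup M_\ell) = \varnothing) .
\end{equation*}
The first two probabilities are $(1-\delta)^{\lfloor n/\varrho(m)\rfloor}$ and $(1-\delta)^{\lfloor n/\varrho(\ell)\rfloor}$ as before. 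For the last one, $A$ avoids $M_m \cup M_\ell$ with probability $(1-\delta)^{|M_m \cup M_\ell|}$, again by independence of the coordinates. It remains to identify $|M_m \cup M_\ell|$: by inclusion–exclusion for sets, $|M_m \cup M_\ell| = |M_m| + |M_\ell| - |M_m \cap M_\ell|$, and here $M_m \cap M_\ell$ is exactly the set of $k \leq n$ divisible by both $\varrho(m)$ and $\varrho(\ell)$, i.e.\ divisible by $[\varrho(m),\varrho(\ell)]$, so $|M_m \cap M_\ell| = \lfloor n/[\varrho(m),\varrho(\ell)]\rfloor$. Substituting yields the exponent $\lfloor n/\varrho(m)\rfloor + \lfloor n/\varrho(\ell)\rfloor - \lfloor n/[\varrho(m),\varrho(\ell)]\rfloor$ in the last term, which is the claimed formula.

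There is no real obstacle here: the only subtlety is the bookkeeping with floor functions, which one handles simply by noting that the number of multiples of $d$ in $\{1,\dots,n\}$ is exactly $\lfloor n/d \rfloor$, and that divisibility by both $\varrho(m)$ and $\varrho(\ell)$ is equivalent to divisibility by their least common multiple $[\varrho(m),\varrho(\ell)]$. The coprimality hypotheses $(m,c_2) = 1$ and $(\ell,c_2) = 1$ (hence $(m\ell,c_2)=1$) are needed only to guarantee that $\varrho(m)$ and $\varrho(\ell)$ are well defined, per the standing convention established after Lemma~\ref{lem:Rank}.
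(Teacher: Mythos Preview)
Your proof is correct and follows essentially the same approach as the paper: both compute $\mathbb{E}(I_A(m))$ by counting multiples of $\varrho(m)$ in $\{1,\dots,n\}$, and both handle the product via the identity $I_A(m)I_A(\ell) = I_A(m) + I_A(\ell) - 1 + (1-I_A(m))(1-I_A(\ell))$ together with the fact that $(1-I_A(m))(1-I_A(\ell))$ is the indicator of the event that $A$ avoids $M_m \cup M_\ell$. Your explicit use of $|M_m \cup M_\ell| = |M_m| + |M_\ell| - |M_m \cap M_\ell|$ and the identification $M_m \cap M_\ell = M_{[\varrho(m),\varrho(\ell)]}$ is exactly how the paper arrives at the exponent.
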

\begin{proof}
By the definition of $I_A$, we have
\begin{equation*}
\mathbb{E}\big(I_A(m)\big) = \mathbb{P}\big(\exists a \in A : \varrho(m) \mid a\big) = 1 - \mathbb{P}\!\left(\bigwedge_{t \,\leq\, n / \varrho(m)} (\varrho(m)t \notin A)\right) = 1 - (1 - \delta)^{\lfloor n / \varrho(m)\rfloor} ,
\end{equation*}
which is the first claim.
On the one hand, by linearity of expectation and by~\eqref{equ:EIA1}, we obtain
\begin{align*}
\mathbb{E}\big(I_A(m)I_A(\ell)\big) &= \mathbb{E}\!\left(I_A(m) + I_A(\ell) - 1 + \big(1-I_A(m)\big)\big(1-I_A(\ell)\big)\right) \\
&= \mathbb{E}\big(I_A(m)\big) + \mathbb{E}\big(I_A(\ell)\big) - 1 + \mathbb{E}\!\left(\big(1-I_A(m)\big)\big(1-I_A(\ell)\big)\right) \\
&= 1 - (1 - \delta)^{\lfloor n / \varrho(m)\rfloor} - (1 - \delta)^{\lfloor n / \varrho(\ell)\rfloor} + \mathbb{E}\big((1-I_A(m))(1-I_A(\ell))\big) .
\end{align*}
On the other hand, by the definition of $I_A$,
\begin{align*}
\mathbb{E}&\left(\big(1-I_A(m)\big)\big(1-I_A(\ell)\big)\right) = \mathbb{P}\big(\forall a \in A : \varrho(m) \nmid a \text{ and } \varrho(\ell) \nmid a\big) \\
&= \mathbb{P}\!\left(\bigwedge_{\substack{k \,\leq\, n \\ \varrho(m)\,\mid\,k \text{ or } \varrho(\ell)\,\mid\,k}} (k \notin A)\right) = (1 - \delta)^{\lfloor n / \varrho(m)\rfloor + \lfloor n / \varrho(\ell)\rfloor - \lfloor n / [\varrho(m), \varrho(\ell)]\rfloor} ,
\end{align*}
and the second claim follows too.
\end{proof}

Now we give an asymptotic formula for the expected value of $X$.

\begin{lem}\label{lem:EX}
We have
\begin{equation*}
\mathbb{E}(X) = \frac{\delta \Li_2(1 - \delta)}{1 - \delta} \cdot \frac{3 \log |\zeta|}{\pi^2} \cdot n^2 + O\!\left(\delta n (\log n)^3 \right) ,
\end{equation*}
for all integers $n \geq 2$.
In particular,
\begin{equation*}
\mathbb{E}(X) \sim \frac{\delta \Li_2(1 - \delta)}{1 - \delta} \cdot \frac{3 \log |\zeta|}{\pi^2} \cdot n^2 ,
\end{equation*}
as $n \to +\infty$, uniformly for $\delta \in {(0,1]}$.
\end{lem}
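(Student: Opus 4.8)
The plan is to compute $\mathbb{E}(X)$ directly from the expression provided by Lemma~\ref{lem:XLIA}. By linearity of expectation and formula~\eqref{equ:EIA1} of Lemma~\ref{lem:EIA}, and then grouping the terms according to the common value $r = \varrho(m)$,
\begin{equation*}
\mathbb{E}(X) = \sum_{\varrho(m) \,\leq\, n} \Lambda(m)\big(1 - (1-\delta)^{\lfloor n / \varrho(m)\rfloor}\big) = \sum_{r \,\leq\, n} \big(1 - (1-\delta)^{\lfloor n / r\rfloor}\big) \sum_{\varrho(m) \,=\, r} \Lambda(m) .
\end{equation*}
Inserting~\eqref{equ:Lambdasum} of Lemma~\ref{lem:Lambda} splits this into a main term $\log|\zeta| \cdot S$, where
\begin{equation*}
S := \sum_{r \,\leq\, n} \varphi(r)\big(1 - (1-\delta)^{\lfloor n / r\rfloor}\big) ,
\end{equation*}
and an error term which, using Lemma~\ref{lem:Bernoulli} (so that $1 - (1-\delta)^{\lfloor n/r\rfloor} \leq \delta n / r$) together with the classical estimate $\sum_{r \leq n} \tau(r)/r \ll (\log n)^2$, is
\begin{equation*}
\ll \sum_{r \,\leq\, n} \tau(r) \log(r+1) \cdot \frac{\delta n}{r} \ll \delta n \log n \sum_{r \,\leq\, n} \frac{\tau(r)}{r} \ll \delta n (\log n)^3 .
\end{equation*}
So everything reduces to an asymptotic formula for $S$.

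To evaluate $S$ I would use the identity $1 - (1-\delta)^k = \delta \sum_{j=1}^{k} (1-\delta)^{j-1}$ (valid for every integer $k \geq 0$ and consistent with the convention at $\delta = 1$), interchange the order of summation, and note that for positive integers $r,j$ the condition $j \leq \lfloor n/r\rfloor$ is equivalent to $r \leq n/j$; this gives
\begin{equation*}
S = \delta \sum_{j \,\geq\, 1} (1-\delta)^{j-1} \sum_{r \,\leq\, n/j} \varphi(r) = \delta \sum_{1 \,\leq\, j \,\leq\, n} (1-\delta)^{j-1}\,\Phi(n/j) .
\end{equation*}
Now I substitute $\Phi(x) = \tfrac{3}{\pi^2} x^2 + O\!\big(x\log(x+2)\big)$, which follows from Lemma~\ref{lem:Phi} for $x \geq 2$ and trivially for $1 \leq x < 2$. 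The resulting error is $\ll \delta n \log n \sum_{j \leq n} (1-\delta)^{j-1}/j \ll \delta n (\log n)^2$, which is absorbed by the $O(\delta n(\log n)^3)$ above, while the main part equals $\tfrac{3\delta n^2}{\pi^2} \sum_{j \leq n} (1-\delta)^{j-1}/j^2$. Extending the last sum to $j = \infty$ costs only $O(1/n)$, and $\sum_{j \geq 1} (1-\delta)^{j-1}/j^2 = \Li_2(1-\delta)/(1-\delta)$ by the definition of the dilogarithm (the value being $1$ when $\delta = 1$). Hence $S = \tfrac{3\delta\Li_2(1-\delta)}{\pi^2(1-\delta)} n^2 + O\big(\delta n (\log n)^2\big)$, and multiplying by $\log|\zeta|$ and adding the earlier error term yields the claimed formula for $\mathbb{E}(X)$.

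For the ``in particular'' assertion I would observe that $\Li_2(1-\delta)/(1-\delta) = \sum_{k \geq 1} (1-\delta)^{k-1}/k^2 \geq 1$ for every $\delta \in (0,1]$, so the main term has exact order $\delta n^2$ (with implied constant depending only on $a_1,a_2$, since $\log|\zeta| > 0$ because $\zeta/\eta$ is not a root of unity), whereas the error $O\big(\delta n (\log n)^3\big)$ is $o(\delta n^2)$ as $n \to +\infty$ uniformly in $\delta$; the uniform asymptotic equivalence follows. I do not expect any genuine obstacle: the computation is of Dirichlet-hyperbola type, and the only points needing a little care are keeping all implied constants independent of $\delta$ (in particular the bound $\sum_{j \leq n} (1-\delta)^{j-1}/j \ll \log n$ with an absolute constant) and the bookkeeping around the floor functions and the small values of the argument of $\Phi$.
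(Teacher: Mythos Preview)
Your argument is correct and follows essentially the same route as the paper: both start from Lemma~\ref{lem:XLIA} and Lemma~\ref{lem:EIA}, insert Lemma~\ref{lem:Lambda}, bound the secondary term via Lemma~\ref{lem:Bernoulli} and $\sum_{r\leq n}\tau(r)/r\ll(\log n)^2$, and then reduce the main sum to $\delta\sum_{j\leq n}(1-\delta)^{j-1}\Phi(n/j)$ before applying Lemma~\ref{lem:Phi}. The only cosmetic difference is that the paper reaches this last expression by grouping over the level sets $\lfloor n/r\rfloor=j$ and telescoping, whereas you expand $1-(1-\delta)^k$ as a geometric sum and swap the order of summation---these are the same manipulation in different dress.
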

\begin{proof}
From Lemma~\ref{lem:XLIA} and Lemma~\ref{lem:EIA}, it follows that
\begin{align*}
\mathbb{E}(X) &= \sum_{\varrho(m) \,\leq\, n} \Lambda(m)\,\mathbb{E}\big(I_A(m)\big) \\
&= \sum_{\varrho(m) \,\leq\, n} \Lambda(m)\,\big(1 - (1 - \delta)^{\lfloor n / \varrho(m)\rfloor}\big) \\
&= \sum_{r \,\leq\, n} \big(1 - (1 - \delta)^{\lfloor n / r\rfloor}\big) \sum_{\varrho(m) \,=\, r} \Lambda(m) .
\end{align*}
Consequently, thanks to Lemma~\ref{lem:Lambda} and Lemma~\ref{lem:Bernoulli}, we obtain
\begin{align}\label{equ:EX1}
\mathbb{E}(X) &= \sum_{r \,\leq\, n} \big(1 - (1 - \delta)^{\lfloor n / r\rfloor}\big) \,\varphi(r) \log|\zeta| + O\!\left(\delta n \sum_{r \,\leq\, n} \frac{\tau(r)\log (r+1)}{r}\right) \\
&= \sum_{r \,\leq\, n} \big(1 - (1 - \delta)^{\lfloor n / r\rfloor}\big) \,\varphi(r) \log|\zeta| + O\!\left(\delta n (\log n)^3\right) , \nonumber
\end{align}
where we used the fact that
\begin{equation*}
\sum_{r \,\leq\, n} \frac{\tau(r)}{r} \leq \Big(\sum_{s \,\leq\, n} \frac1{s}\Big)^2 \ll (\log n)^2 .
\end{equation*}
Note that $\lfloor n / r \rfloor = j$ if and only if $r \in {(n / (j + 1), n / j]}$.
Hence,
\begin{align}\label{equ:EX2}
\sum_{r \,\leq\, n} &\big(1 - (1 - \delta)^{\lfloor n / r \rfloor}\big) \,\varphi(r) = \sum_{j \,\leq\, n} \big(1 - (1 - \delta)^j \big) \sum_{n / (j + 1)\,<\, r \,\leq\, n / j} \varphi(r) \\
&= \sum_{j \,\leq\, n} \big(1 - (1 - \delta)^j \big) \left(\Phi\!\left(\frac{n}{j}\right) - \Phi\!\left(\frac{n}{j+1}\right)\right) \nonumber\\
&= \delta \sum_{j \,\leq\, n} (1 - \delta)^{j-1} \,\Phi\!\left(\frac{n}{j}\right) \nonumber\\
&= \delta \sum_{j \,\leq\, n} \frac{(1 - \delta)^{j-1}}{j^2} \cdot \frac{3}{\pi^2}\cdot n^2 + O\!\left(\delta \sum_{j \,\leq\, n} \frac{n}{j} \log\!\left(\frac{n}{j}\right)\right) \nonumber\\
&= \frac{\delta \Li_2(1 - \delta)}{1 - \delta} \cdot \frac{3}{\pi^2}\cdot n^2 + O\!\left(\delta n (\log n)^2\right) , \nonumber
\end{align}
where we used Lemma~\ref{lem:Phi}.
Finally, putting together~\eqref{equ:EX1} and \eqref{equ:EX2}, we get the desired claim.
\end{proof}

The next lemma is an upper bound for the variance of $X$.

\begin{lem}\label{lem:VX}
We have
\begin{equation*}
\mathbb{V}(X) \ll \delta n^3 \log n,
\end{equation*}
for all integers $n \geq 2$.
\end{lem}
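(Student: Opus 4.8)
The plan is a routine second-moment estimate based on the formula $X = \sum_{\varrho(m)\,\leq\,n}\Lambda(m)\,I_A(m)$ of Lemma~\ref{lem:XLIA}. By bilinearity of the covariance,
\[
\mathbb{V}(X) = \sum_{\varrho(m)\,\leq\,n}\ \sum_{\varrho(\ell)\,\leq\,n}\Lambda(m)\,\Lambda(\ell)\,\operatorname{Cov}\!\big(I_A(m),I_A(\ell)\big),
\]
so it suffices to bound each covariance and then carry out the resulting double sum.

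First I would compute the covariance by subtracting the two displays of Lemma~\ref{lem:EIA}; the terms $1$, $(1-\delta)^{\lfloor n/\varrho(m)\rfloor}$ and $(1-\delta)^{\lfloor n/\varrho(\ell)\rfloor}$ cancel, leaving
\[
\operatorname{Cov}\!\big(I_A(m),I_A(\ell)\big) = (1-\delta)^{\lfloor n/\varrho(m)\rfloor + \lfloor n/\varrho(\ell)\rfloor - \lfloor n/[\varrho(m),\varrho(\ell)]\rfloor}\Big(1 - (1-\delta)^{\lfloor n/[\varrho(m),\varrho(\ell)]\rfloor}\Big) \geq 0 .
\]
Since $[\varrho(m),\varrho(\ell)]$ is a common multiple of $\varrho(m)$ and $\varrho(\ell)$, we have $\lfloor n/[\varrho(m),\varrho(\ell)]\rfloor \leq \min\{\lfloor n/\varrho(m)\rfloor,\lfloor n/\varrho(\ell)\rfloor\}$, so the exponent of the first factor is nonnegative and that factor is at most $1$; meanwhile Lemma~\ref{lem:Bernoulli} bounds the second factor by $\delta\lfloor n/[\varrho(m),\varrho(\ell)]\rfloor \leq \delta n/[\varrho(m),\varrho(\ell)]$. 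Hence $\operatorname{Cov}(I_A(m),I_A(\ell)) \leq \delta n/[\varrho(m),\varrho(\ell)]$ for all $m,\ell$, the diagonal case $m=\ell$ included.

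Plugging this bound in and grouping $m$ and $\ell$ according to $r := \varrho(m)$ and $s := \varrho(\ell)$ (on which the covariance depends only through $r$, $s$), I would use $\sum_{\varrho(m)\,=\,r}\Lambda(m) \ll \varphi(r)$ from Lemma~\ref{lem:Lambda} together with $\varphi(r)\varphi(s)/[r,s] = \varphi(r)\varphi(s)(r,s)/(rs) \leq (r,s)$ to obtain
\[
\mathbb{V}(X) \ll \delta n \sum_{r\,\leq\,n}\ \sum_{s\,\leq\,n} \frac{\varphi(r)\varphi(s)}{[r,s]} \leq \delta n \sum_{r\,\leq\,n}\ \sum_{s\,\leq\,n} (r,s) \leq \delta n \sum_{d\,\leq\,n} d\,\lfloor n/d\rfloor^2 \leq \delta n^3 \sum_{d\,\leq\,n} \frac1d \ll \delta n^3 \log n ,
\]
where I wrote $(r,s)=d$, bounded the number of pairs $r,s\leq n$ with $(r,s)=d$ by $\lfloor n/d\rfloor^2$, and used $d\lfloor n/d\rfloor^2\leq n^2/d$ and $\sum_{d\,\leq\,n}1/d \ll \log n$ for $n\geq 2$. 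This is exactly the claimed estimate.

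I do not expect a genuine obstacle: every step is either elementary or an appeal to a lemma already proved. The only points needing a little care are checking that the exponent $\lfloor n/\varrho(m)\rfloor + \lfloor n/\varrho(\ell)\rfloor - \lfloor n/[\varrho(m),\varrho(\ell)]\rfloor$ is nonnegative, so that discarding the factor $(1-\delta)^{(\cdots)}$ is legitimate, and resisting the temptation to exploit its decay — the crude bound $1-(1-\delta)^c\leq\delta c$ already suffices and turns the double sum into a clean divisor sum. The logarithmic loss is just the harmonic sum $\sum_{d\,\leq\,n}1/d$ and is harmless: combined with $\mathbb{E}(X)\asymp\delta n^2$ from Lemma~\ref{lem:EX}, it gives $\mathbb{V}(X)/\mathbb{E}(X)^2 \ll \log n/(\delta n) = o(1)$ precisely under the hypothesis $\delta n/\log n\to+\infty$, which is what the concentration argument for Theorem~\ref{thm:Main} will require.
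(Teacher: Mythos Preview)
Your proof is correct and follows the same strategy as the paper: expand the variance bilinearly, compute the covariance from Lemma~\ref{lem:EIA}, bound it by $\delta n/[\varrho(m),\varrho(\ell)]$ via Lemma~\ref{lem:Bernoulli}, group by $r=\varrho(m)$ and $s=\varrho(\ell)$, and apply the bound~\eqref{equ:Lambdabound} of Lemma~\ref{lem:Lambda}. Your final estimate of $\sum_{r,s\leq n}\varphi(r)\varphi(s)/[r,s]$ is in fact a little cleaner than the paper's: you simply use $\varphi(r)\leq r$, $\varphi(s)\leq s$ to reduce to $\sum_{r,s\leq n}(r,s)\leq\sum_{d\leq n}d\lfloor n/d\rfloor^2\ll n^2\log n$, whereas the paper retains the factors $\varphi(r)/r$, $\varphi(s)/s$, writes $r=dr'$, $s=ds'$ with $d=(r,s)$, and then invokes $\varphi(dt)\leq d\,\varphi(t)$ together with the bound $\sum_{t\leq x}\varphi(t)/t\ll x$ from Lemma~\ref{lem:Phi} --- a small detour that lands at the same $n^2\log n$.
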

\begin{proof}
On the one hand, by Lemma~\ref{lem:XLIA}, we have
\begin{align*}
\mathbb{V}(X) &= \mathbb{E}\big(X^2\big) - \mathbb{E}(X)^2 \\
&= \sum_{\varrho(m),\,\varrho(\ell) \,\leq\, n} \Lambda(m)\Lambda(\ell)\!\left(\mathbb{E}\big(I_A(m)I_A(\ell)\big) - \mathbb{E}\big(I_A(m)\big)\mathbb{E}\big(I_A(\ell)\big)\right) .
\end{align*}
On the other hand, from Lemma~\ref{lem:EIA} and Lemma~\ref{lem:Bernoulli}, it follows that
\begin{align*}
\mathbb{E}&\big(I_A(m)I_A(\ell)\big) - \mathbb{E}\big(I_A(m)\big)\mathbb{E}\big(I_A(\ell)\big) \\
&= (1 - \delta)^{\lfloor n / \varrho(m) \rfloor + \lfloor n / \varrho(\ell) \rfloor - \lfloor n / [\varrho(m),\varrho(\ell)] \rfloor} \big(1 - (1 - \delta)^{\lfloor n / [\varrho(m), \varrho(\ell)]\rfloor}\big) \leq \frac{\delta n}{[\varrho(m), \varrho(\ell)]} .
\end{align*}
Therefore,
\begin{align}\label{equ:VX1}
\mathbb{V}(X) &\leq \delta n \sum_{\varrho(m),\,\varrho(\ell) \,\leq\, n} \frac{\Lambda(m)\,\Lambda(\ell)}{[\varrho(m), \varrho(\ell)]} = \delta n \sum_{r,s \,\leq\, n} \frac1{[r,s]} \sum_{\varrho(m) \,=\, r} \Lambda(r) \sum_{\varrho(\ell) \,=\, s} \Lambda(\ell) \\
&\ll \delta n \sum_{r,s \,\leq\, n} \frac{\varphi(r)\,\varphi(s)}{[r,s]} = \delta n \sum_{r,s \,\leq\, n} (r,s)\,\frac{\varphi(r)\,\varphi(s)}{rs} , \nonumber
\end{align}
where we used Lemma~\ref{lem:Lambda} and the identity $[r,s] = rs / (r, s)$.
At this point, writing $r = dr^\prime$ and $s = ds^\prime$, where $d := (r, s)$, we obtain
\begin{align}\label{equ:VX2}
\sum_{r,s \,\leq\, n} (r,s)\,\frac{\varphi(r)\,\varphi(s)}{rs} &= \sum_{d \,\leq\, n} d \sum_{\substack{r^\prime\!, s^\prime \,\leq\, n / d \\ (r^\prime\!,s^\prime) \,=\, 1}} \frac{\varphi(dr^\prime)\,\varphi(ds^\prime)}{d^2 r^\prime s^\prime} \leq \sum_{d \,\leq\, n} d \,\Big(\sum_{t \,\leq\, n / d} \frac{\varphi(t)}{t}\Big)^2 \\
&\ll \sum_{d \,\leq\, n} d \left(\frac{n}{d}\right)^2 \ll n^2 \log n , \nonumber
\end{align}
where we used Lemma~\ref{lem:Phi} and the inequality $\varphi(dm) \leq d\varphi(m)$, holding for every integer $m \geq 1$.
Finally, putting together~\eqref{equ:VX1} and \eqref{equ:VX2}, we get the desired claim.
\end{proof}

\begin{proof}[Proof of Theorem~\ref{thm:Main}]
By Chebyshev's inequality, Lemma~\ref{lem:EX}, and Lemma~\ref{lem:VX}, we have
\begin{equation*}
\mathbb{P}\big(|X - \mathbb{E}(X)| \geq \varepsilon\mathbb{E}(X)\big) \leq \frac{\mathbb{V}(X)}{\big(\varepsilon \mathbb{E}(X)\big)^2} \ll \frac{\log n}{\varepsilon^2 \delta n} = o_\varepsilon(1) ,
\end{equation*}
as $\delta n / \log n \to +\infty$.
Hence, again by Lemma~\ref{lem:EX}, we have
\begin{equation*}
X \sim \frac{\delta \Li_2(1 - \delta)}{1 - \delta} \cdot \frac{3 \log |\zeta|}{\pi^2} \cdot n^2 ,
\end{equation*}
with probability $1 - o(1)$, as desired.
\end{proof}

\bibliographystyle{amsplain}
\providecommand{\bysame}{\leavevmode\hbox to3em{\hrulefill}\thinspace}

\end{document}